\tikzset{
every edge quotes/.append style = {font=\footnotesize},
       every edge/.append style = {> = stealth, 
                                   draw=red, semithick},
                  vertex/.style = {shape=circle, draw, minimum size=1.5em, inner sep=2pt}
        } 
\theoremstyle{plain}
\newtheorem{theorem}{Theorem}[section]
\newtheorem{lemma}[theorem]{Lemma}
\newtheorem{proposition}[theorem]{Proposition}
\newtheorem{corollary}[theorem]{Corollary}
\theoremstyle{remark}
\newtheorem{definition}{Definition}
\newtheorem{example}{Example}[section]
\newtheorem{remark}{Remark}[section]
\begin{document}
\setcounter{section}{0}

\thispagestyle{empty}

\newcommand{\QQ}{\mathbb{Q}}
\newcommand{\R}{\mathbb{R}}
\newcommand{\Z}{\mathbb{Z}}
\newcommand{\N}{\mathbb{N}}
\newcommand{\RR}{\mathbb{R}}
\newcommand{\ZZ}{\mathbb{Z}}
\newcommand{\NN}{\mathbb{N}}
\newcommand{\Nor}{\mathscr{N}}
\newcommand{\CC}{\mathbb{C}}
\newcommand{\HH}{\mathbb{H}}
\newcommand{\EE}{\mathbb{E}}
\newcommand{\Var}{\operatorname{Var}}
\newcommand{\PP}{\mathbb{P}}
\newcommand{\Rd}{\mathbb{R}^d}
\newcommand{\Rn}{\mathbb{R}^n}
\newcommand{\XX}{\mathcal{X}}
\newcommand{\YY}{\mathcal{Y}}
\newcommand{\MM}{\FF}
\newcommand{\BHH}{\overline{\mathbb{H}}}
\newcommand{\XB}{( \mathcal{X},\mathscr{B} )}
\newcommand{\BB}{\mathscr{B}}
\newcommand{\system}{(\Omega,\mathcal{F},\mu,T)}
\newcommand{\FF}{\mathcal{F}}
\newcommand{\GG}{\mathcal{G}}
\newcommand{\MBS}{(\Omega,\mathcal{F})}
\newcommand{\MBSE}{(E,\mathscr{E})}
\newcommand{\MS}{(\Omega,\mathcal{F},\mu)}
\newcommand{\PS}{(\Omega,\mathcal{F},\mathbb{P})}
\newcommand{\LDP}{LDP(\mu_n, r_n, I)}
\newcommand{\Def}{\overset{\text{def}}{=}}
\newcommand{\Series}[2]{#1_1,\cdots,#1_#2}
\newcommand{\independent}{\perp\mkern-9.5mu\perp}
\def\avint{\mathop{\,\rlap{-}\!\!\int\!\!\llap{-}}\nolimits}

\author[Shuo Qin]{Shuo Qin}
\address[Shuo Qin]{NYU-ECNU Institute of Mathematical Sciences at NYU Shanghai and Courant Institute of Mathematical Sciences \& Beijing Institute of Mathematical Sciences and Applications}
\email{qinshuo@bimsa.cn}

\title{Interacting urn models with strong reinforcement}
\date{}

  \begin{abstract}
 For the interacting urn model with polynomial reinforcement, it has been conjectured in \cite{launay2011interacting} that almost surely one color monopolizes all the urns if the interaction parameter $p>0$. We disprove the conjecture.  

    For the case $p=1$, we give a sufficient condition for monopoly, which improves a result obtained by Launay in \cite{launay2012urns}.
  \end{abstract}
\maketitle

\section{General introduction}

\subsection{Definition of the model}

Reinforced processes provide a rich framework for modeling and analyzing systems in physics, economics, and social sciences, where history plays a crucial role
in shaping future dynamics. We refer to \cite{MR2282181} for a survey of various models of random processes with reinforcement and their applications, a basic model of which is the well-known P\'olya urn model. A generalized P\'olya urn model is defined as follows. 

Let $\{W(n)\}_{n\geq 1}$ be a positive sequence. Given an urn of black and red balls, let $B_n$ and $R_n$ denote the number of black and red balls in the urn at time $n \in \NN$, respectively. We assume that $B_0$ and $R_0$ are positive integers. At each time step $n\geq 1$, we draw a ball from the urn and then return the ball to the urn along with another ball of the same color. The probability of drawing a ball of a certain color from the urn is proportional to $W(k)$ where $k$ is the number of balls of this color, that is,
$$
\PP(B_{n+1}=B_n+1|(B_i)_{0\leq i\leq n},(R_i)_{\leq i\leq n})=\frac{W(B_n)}{W(B_n)+W(R_{n})}, \quad n\in \NN.
$$
Notice that the classical P\'olya urn corresponds to the case $W(n)=n$.

In \cite{Drinea_Frieze_Mitzenmacher_2002, MR2376425}, this model was called a balls-in-bins process with feedback where the authors were motivated by economic problems of competition and the sequence $\{W(n)\}_{n\geq 1}$ was called the feedback function. It is also referred to as an ordinal dependent P\'olya urn \cite{MR2282181} since it is equal in law to the following process: At each time step, we draw a ball from the urn randomly uniformly with replacement, and add $W(n+1)-W(n)$ red balls, resp. $W(n+1)-W(n)$ black balls, if it is the $n$-th time a red ball, resp. a black ball is drawn.

We denote by $\mathcal{D}$ the event that eventually only balls of one color are added to the urn. Using Rubin’s exponential embedding, Davis \cite{MR1030727} proved that 
\begin{equation}
    \label{Davissumcond}
    \PP(\mathcal{D})= \left \{ \begin{aligned} &1,  && \text{if } \sum_{n=1}^{\infty}\frac{1}{W(n)}<\infty, \\ & 0, && \text{if } \sum_{n=1}^{\infty}\frac{1}{W(n)}=\infty. \end{aligned} \right.
\end{equation}

Recently, there is a growing interest in the study of systems of interacting urns, see e.g. \cite{MR2493995, MR3346459, MR3452818, MR4547356, MR3217785, hu2011reinforcement, MR4546116, mirebrahimi2019interacting, MR3581253}. We study a model of (strongly) reinforced interacting urns introduced by Launay \cite{launay2011interacting}, which can be described as follows.

The model has $d$ urns containing black and red balls. Imagine that there are barriers separating different urns. At each step, for the i-th urn, $i=1,2,\cdots, d$, 
\begin{enumerate}  
    \item with probability $p\in [0,1]$, (all the barriers are removed, and) a ball is drawn from a combined pool of all urns with replacement, see e.g. Figure \ref{fig2urnp} for the case $d=2$;  
    \item with probability $1-p$, (the barriers are kept, and) a ball is drawn from the i-th urn with replacement, see e.g. Figure \ref{fig2urn1p};
    \item The probability of drawing a ball of a certain color is proportional to $W$(\#the number of balls of that color), as in the ordinal-dependent P\'olya's urn;
    \item In either case, we add another ball of the same color as the drawn ball to the i-th urn.
\end{enumerate}
We do the above procedure simultaneously and independently for each urn.

More precisely, let $B_n(i)$ and $R_n(i)$ denote the number of black and red balls in the i-th urn at time $n$, respectively. Then, $B_n^{*}:=\sum_{i=1}^d B_n(i)$, resp. $R_n^{*}:=\sum_{i=1}^d R_n(i)$, is the total number of black balls, resp. red balls, in the system at time $n$. Write $R_n:=(R_n(i))_{1\leq i \leq d}$ and $B_n:=(B_n(i))_{1\leq i \leq d}$. The initial composition is given by $(B_0,R_0)\in \NN^{2d}$ with $B_0^{*}, R_0^{*}\geq 1$. Let $\{W(n)\}_{n\geq 1}$ be a positive sequence and let $W(0)\geq 0, p \in [0,1]$ be two constants. For any $n\in \NN$, conditional on $\mathcal{G}_n:=\sigma(B_m, R_m: m\leq n)$, define independent Bernoulli random variables $(\xi_{n+1}(i))_{1\leq i \leq d}$ by 
\begin{equation}
    \label{defxi}
    \PP(\xi_{n+1}(i)=1|\mathcal{G}_n)=\frac{p W(B_n^*)}{W(B_n^*)+W(R_n^*)}+\frac{(1-p)W(B_n(i))}{W(B_n(i))+W(R_n(i))}.
\end{equation}
Now set 
\begin{equation}
    \label{defBR}
    B_{n+1}(i):=B_{n}(i)+\xi_{n+1}(i), \quad R_{n+1}(i):=R_{n}(i)+1-\xi_{n+1}(i).
\end{equation}
The process $(B_n, R_n)_{n\in \NN}$ is called the interacting urn mechanism (IUM) with reinforcement sequence $\{W(n)\}_{n\in \NN}$ and interaction parameter $p$. We denote its law by $\PP_p^W$. 

\begin{figure}
    \centering
    \subfigure[barrier is kept with prob. $1-p$]{
        \begin{tikzpicture}[scale=1]
     
         \filldraw[line width=3pt, brown, fill=white] (0,1.8) -- (0,0) -- (2.5,0) -- (2.5,1.8);
         \filldraw[line width=3pt, brown, fill=white] (2.5,0) -- (5,0) -- (5,1.8);
     \node[circle, fill=black,text=white,font={\bfseries}](r1) at (0.9,1.1) {B};
     \node[circle, fill=black,text=white,font={\bfseries}](r2) at (2.1,0.5) {B};
     \node[circle, fill=red,text=white,font={\bfseries}](l1) at (0.5,0.5) {R};
     \node[circle, fill=black,text=white,font={\bfseries}](u1) at (1.3,0.5) {B};
     \node[circle, fill=black,text=white,font={\bfseries}](d1) at (1.7,1.1) {B};
     
     \node[above] at (1.3,1.9) {Urn 1};
     \node[above] at (3.7,1.9) {Urn 2};
     
     \node[circle, fill=red,text=white,font={\bfseries}](r3) at (3.4,1.1) {R};
     \node[circle, fill=black,text=white,font={\bfseries}](r4) at (4.6,0.5) {B};
     \node[circle, fill=red,text=white,font={\bfseries}](l2) at (3,0.5) {R};
     \node[circle, fill=black,text=white,font={\bfseries}](u2) at (3.8,0.5) {B};
     \node[circle, fill=red,text=white,font={\bfseries}](d2) at (4.2,1.1) {R}; 
     
     \end{tikzpicture}
 
         \label{fig2urn1p}
         }
\hspace{5pt}	
\subfigure[barrier is removed with prob. $p$]{
   \begin{tikzpicture}[scale=1]

    \filldraw[line width=3pt, brown, fill=white] (0,1.8) -- (0,0) -- (2.5,0);
    \filldraw[line width=3pt, brown, fill=white] (2.5,0) -- (5,0) -- (5,1.8);
\node[circle, fill=black,text=white,font={\bfseries}](r1) at (0.9,1.1) {B};
\node[circle, fill=black,text=white,font={\bfseries}](r2) at (2.1,0.5) {B};
\node[circle, fill=red,text=white,font={\bfseries}](l1) at (0.5,0.5) {R};
\node[circle, fill=black,text=white,font={\bfseries}](u1) at (1.3,0.5) {B};
\node[circle, fill=black,text=white,font={\bfseries}](d1) at (1.7,1.1) {B};

\node[circle, fill=red,text=white,font={\bfseries}](r3) at (3.4,1.1) {R};
\node[circle, fill=black,text=white,font={\bfseries}](r4) at (4.6,0.5) {B};
\node[circle, fill=red,text=white,font={\bfseries}](l2) at (3,0.5) {R};
\node[circle, fill=black,text=white,font={\bfseries}](u2) at (3.8,0.5) {B};
\node[circle, fill=red,text=white,font={\bfseries}](d2) at (4.2,1.1) {R}; 

\end{tikzpicture}

    \label{fig2urnp}
    }

\end{figure}

Unless otherwise specified, we assume $d=2$ (i.e. there are two urns) for simplicity. Without loss of generality, we assume that $R_0(1)\geq 1$ and $B_0(2)\geq 1$. Let
\begin{equation}
    \label{xinportiondef}
   x_n:=\frac{B_n(1)}{n+B_0(1)+R_0(1)}, \quad y_n:=\frac{B_n(2)}{n+B_0(2)+R_0(2)}
\end{equation}
be the proportions of black balls in the two urns at time $n$, respectively. 

For an IUM with $p>0$, there is a tendency for different components to adopt a common behavior. For example, for IUM with linear reinforcements, i.e. $W(n)=n$, Dai Pra, Louis and Minelli \cite{MR3217785} proved that if $B_0(1)=B_0(2)$ and $R_0(1)=R_0(2)$, then for any $p>0$, almost surely, 
\begin{equation}
    \label{commonlimit}
    \lim_{n\to \infty} x_n\ \text{and}\ \lim_{n\to \infty} y_n \quad \text{both exist and are equal.}
\end{equation}
This phenomenon is called synchronization. Moreover, it has been proved in \cite[Theorem 3.2]{MR3452818} that this common limit satisfies
\begin{equation}
    \label{nonatomcomlim}
    \PP(\lim_{n\to \infty} x_n\in \{0,1\})<1,\quad \text{and} \quad \PP(\lim_{n\to \infty} x_n=x)=0,\ \text{for any}\ x\in (0,1).
\end{equation}
However, IUMs with strong reinforcement may exhibit very different behaviors. We say that $\{W(n)\}_{n\in \NN}$ is a strong reinforcement sequence if
    \begin{equation}
        \label{strongreinforce}
        \sum_{n=1}^{\infty}\frac{1}{W(n)}<\infty.
    \end{equation} 
    
As we will see later, for some IUMs with strong reinforcement sequences and weak interaction (i.e. $p$ is small), one color can maintain its advantage in a single urn while it is at a disadvantage globally, in which case the urns do not synchronize. Indeed, this models a common phenomenon in economic systems: Many companies perform well in their local markets but struggle to replicate that success globally due to weak interactions between regions. On the other hand, for strong interaction, the phenomena of domination and monopoly may occur, as already exhibited in the ordinal dependent P\'olya urn.

\begin{definition}[Domination and monopoly]
    For an IUM, we denote by
      $$
  \mathcal{D}=\left\{\lim _{n \rightarrow \infty} (x_n,y_n)=(0,0)\right\} \cup\left\{\lim_{n \rightarrow \infty} (x_n,y_n)=(1,1)\right\}
  $$
  the event that eventually the number of balls of one color is negligible with respect to the
  number of balls of the other color, and call this event domination. Further, we denote by
  $$
  \mathcal{M}=\left\{R^{*}_{n+1}=R^{*}_n \text { eventually for all } n\right\} \cup\left\{B^{*}_{n+1}=B^{*}_n \text { eventually for all } n\right\}
  $$
  the event that eventually only balls of one color are added to the urns, and call this event monopoly. Note that $\mathcal{M} \subset \mathcal{D}$.
  \end{definition}
  
  We will be interested in how $\PP^{W}_p(\mathcal{D})$ and $\PP^{W}_p(\mathcal{M})$ are affected by the parameter $p$ and the sequence $\{W(n)\}_{n\in \NN}$, especially in the case of power function/polynomial reinforcement sequences, i.e. $W(n)=n^{\alpha}$, $n\geq 1$, for some real number $\alpha \geq 1$, or $W(n)$ is of the form
  \begin{equation}
      \label{defWpolyalpha}
      W(n):=n^{\alpha}+c_1n^{\alpha-1}+\cdots+c_{\alpha}, \quad n\geq 1,
  \end{equation}
  where $\alpha$ is a positive integer. For $\alpha=1$, we can assume that $c_1=0$ since in this case the IUM is equal in law to an IUM with $W(n)=n$ and shifted initial condition $(B_0, R_0)+(c_1,c_1,c_1,c_1)$. In addition, as we will see later, our results do not depend on $c_1, c_{\alpha-1},\cdots,c_{\alpha}$. Thus, by a slight abuse of notation, in either of the two cases above, we let $\PP_p^{(\alpha)}$ denote the law of the IUM with reinforcement sequence $\{W(n)\}_{n\in \NN}$ and parameter $p$. We show that under $\PP_p^{(\alpha)}$, domination implies monopoly a.s., and thus, we will not distinguish the two events in the sequel.
  
  \begin{proposition}
      \label{weakstrongdomi}
  Assume that $\{W(n)\}_{n\in \NN}$ satisfies (\ref{strongreinforce}) and is eventually increasing, i.e., there exists $N_1\in \NN$ such that $W(n+1)-W(n)\geq 0$ for all $n\geq N_1$. If 
  \begin{equation}
      \label{remKcond}
      \lim_{K\to \infty}\limsup_{n\to \infty}\left(\sum_{i\geq Kn}\frac{1}{W(i)}\right)\left(\sum_{i\geq n}\frac{1}{W(i)}\right)^{-1}=0,
  \end{equation}
  then $\PP_p^{W}(\mathcal{D} \backslash \mathcal{M})=0$ for any $p\in [0,1]$. In particular, for any $\alpha > 1$ and $p\in [0,1]$, one has $\PP_p^{(\alpha)}(\mathcal{D} \backslash \mathcal{M})=0$. 
  \end{proposition}

  For $\alpha=1$, by classical results for P\'olya urn model and (\ref{nonatomcomlim}), one has 
  \begin{equation}
      \label{P01D}
      \PP_0^{(1)}(\mathcal{D})=0, \quad  \PP_p^{(1)}(\mathcal{D})<1\ \text{for any } p>0.
  \end{equation}
  We will show in Theorem \ref{xnynu} (i) that $\PP_p^{(1)}(\mathcal{D})=0$ for $p>0$. Thus, $\PP_p^{(\alpha)}(\mathcal{D} \backslash \mathcal{M})=0$ also holds for $\alpha=1$.
  
   For $\alpha>1$, Launay proved in \cite[Theorem 2.3]{launay2012urns} that $\PP^{(\alpha)}_1(\mathcal{M})=1$ and conjectured that $\PP_p^{(\alpha)}(\mathcal{M})=1$ for all $p>0$. This work aims to disprove the conjecture and generalize the results obtained in \cite{launay2012urns}. 

\subsection{Main results}

\subsubsection{Power function/Polynomial reinforcements}

For $\alpha>1$, define 
\begin{equation}
    \label{defpmcritical}
    p_{\alpha}:=\inf\{0\leq q \leq 1: \PP_p^{(\alpha)}(\mathcal{D})=1, \ \forall p\geq q\}.
\end{equation}
which we call the critical parameter. Our first main result shows that $p_{\alpha}>0$, which disproves Launay's conjecture. Moreover, for $\alpha=1$, we prove that (\ref{commonlimit}) holds for general initial conditions, and the domination occurs with probability 0.

\begin{theorem}
    \label{xnynu}
  For any $\alpha\geq 1$ and $p\in [0,1]$, under $\PP_p^{(\alpha)}$, the sequence $(x_n,y_n)_{n\in \NN}$ defined in (\ref{xinportiondef}) is convergent a.s.. Moreover, \\
(i) if $\alpha=1$, then for any $p>0$, one has $\PP_p^{(1)}(\mathcal{D})=0$ and  
$$\lim_{n\to \infty}x_n=\lim_{n\to \infty}y_n, \quad a.s.$$   
 (ii) if $\alpha >1$, then 
    $$
    \PP_p^{(\alpha)}\left(\lim_{n\to \infty}(x_n,y_n)=(u_{\alpha, p},1-u_{\alpha, p})\right)>0, \quad \text{for} \ p\leq \frac{1}{6\alpha}\min\left(\alpha-1,2\right),
    $$
 where $u_{\alpha, p}$ is the unique solution of the following equation on $[0,1/2):$ 
    $$
    u=\frac{(1-p)u^{\alpha}}{u^{\alpha}+(1-u)^{\alpha}}+\frac{p}{2}.
    $$
\end{theorem}
Note that $u_{\alpha, p}$ exists if $p<(\alpha-1)/\alpha$, see Lemma \ref{lemsoluualpha}.

As mentioned in \cite[Conclusion]{launay2011interacting}, polynomial reinforcements do not behave as exponential reinforcements where $W(n)=\rho^n, n\in \NN$, for some $\rho>1$. It has been proved in \cite{launay2011interacting} that, for any $\rho>1$, one has 
$$\PP_p^W(\mathcal{M})=1, \ \text{if } p\geq \frac{1}{2}; \quad \PP_p^W(\mathcal{D}^c)\geq \PP_p^W(\lim_{n\to \infty}(x_n,y_n) = (p,1))>0,\ \text{if } p< \frac{1}{2}.$$ 
Therefore, one can observe a phase transition at $p=1/2$. 
\begin{remark}
    As $\rho$ tends to $\infty$, the exponential reinforcement mechanism converges to the "generalized" reinforcement, which was introduced and studied by Launay and Limic in \cite{launay2012generalized}.
\end{remark}

Our second main result shows that power function/polynomial reinforcements are weaker than exponential reinforcements in the sense that $p_{\alpha}<1/2$. On the other hand, for large $\alpha$, the reinforcement becomes very strong and behaves "like" the exponential reinforcement: For $p<1/2$, if $\alpha$ is sufficiently large, then with positive probability, $\lim_{n\to \infty}(x_n,y_n)$ is close to $(p,1)$. In particular, $\lim_{\alpha \to \infty}p_{\alpha}=1/2$.

\begin{theorem}
    \label{pmlarger12}
   (i) For any $\alpha >1$, one has $p_{\alpha}<\min(1/2,\alpha^{-1}(\alpha-1))$. \\
   (ii) For $\alpha\geq 3$, if $p\leq 1/2-\alpha^{-1}\log \alpha$, then $\PP_p^{(\alpha)}(\mathcal{D})<1$. \\
   (iii) Fix $p<1/2$, for sufficiently large $\alpha$, there exists $s_{\alpha} \in [0,1/2)\times (1/2,1]$ such that  
   $$\PP_p^{(\alpha)}(\lim_{n\to \infty}(x_n,y_n) = s_{\alpha})>0\quad \text{and} \quad \lim_{\alpha\to \infty}s_{\alpha} = (p,1).$$ 
\end{theorem}

\begin{remark}
    \label{palpharem}
For $\alpha>1$, by Theorem \ref{xnynu} (ii), we can define
\begin{equation}
    \label{supPDcpositive}
    \tilde{p}_{\alpha}:=\sup\{0\leq q \leq 1: \PP_p^{(\alpha)}(\mathcal{D})<1, \ \forall p\leq q\}.
\end{equation}
Then $\tilde{p}_{\alpha}\leq p_{\alpha}$. We conjecture that $\tilde{p}_{\alpha}= p_{\alpha}$. Theorem \ref{xnynu} and Theorem \ref{pmlarger12} give some estimates on their convergence rate to $0$, resp. $1/2$, as $\alpha \to 1$, resp. as $\alpha\to \infty$:
    $$ \frac{\alpha-1}{6 \alpha}\leq \tilde{p}_{\alpha} \leq  p_{\alpha} < \frac{\alpha-1}{\alpha}\quad (\alpha \leq 3); \quad  \frac{1}{2} - \frac{\log \alpha}{\alpha} \leq \tilde{p}_{\alpha}  \leq p_{\alpha} <\frac{1}{2}\quad (\alpha \geq 3).$$
\end{remark}

 \subsubsection{Urns with simultaneous drawing}
\label{urnsimulp1}

For general reinforcement sequences, the case $p=1$ was studied in \cite{launay2012urns} whose main result is the following. Recall (\ref{strongreinforce}) for the definition of the strong reinforcement sequences.

\begin{theorem}[Launay, \cite{launay2012urns}]
    \label{stronincfix}
    Given $d \geq 2$ urns, if $\{W(n)\}_{n\in \NN}$ is a non-decreasing strong reinforcement sequence, then $\PP_1^W(\mathcal{M})=1$.
\end{theorem}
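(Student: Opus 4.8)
The plan is to reduce the statement to a monopoly statement for the \emph{aggregate} urn and then to combine an elementary tail estimate coming from \eqref{strongreinforce} with an ``escape from the balanced state'' argument. When $p=1$ we have $\eta_{n+1}^{(i)}\equiv 1$, so \eqref{defxi} reads $\PP(\xi_{n+1}(i)=1\mid\mathcal G_n)=q_n:=W(B_n^*)/(W(B_n^*)+W(R_n^*))$ for each $i$, with $\xi_{n+1}(1),\dots,\xi_{n+1}(d)$ conditionally i.i.d.\ given $\mathcal G_n$ (the sequence $\eta$ being deterministic here). Hence $(B_n^*,R_n^*)_n$ is a Markov chain: $B_{n+1}^*=B_n^*+S_{n+1}$ with $S_{n+1}\mid\mathcal G_n\sim\mathrm{Bin}(d,q_n)$, and $B_n^*+R_n^*=dn+C$ with $C:=B_0^*+R_0^*$. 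Since $\mathcal M$ is a function of $(B_n^*,R_n^*)_n$ and both sequences are non-decreasing, one checks directly that $\mathcal M=\{s_n\ \text{is bounded}\}$, where $s_n:=\min(B_n^*,R_n^*)$: if $s_n=s$ for all large $n$, then $D_n:=B_n^*-R_n^*=dn+C-2s\to\infty$, so eventually $B_n^*>R_n^*$ and the minority total $R_n^*$ is frozen at $s$. So it suffices to prove $\PP_1^W(s_n\to\infty)=0$; here $s_n$ is non-decreasing with increments in $\{0,\dots,d\}$, $s_n\le(dn+C)/2$, and $\ell_n:=\max(B_n^*,R_n^*)=dn+C-s_n\ge n$ because $d\ge2$.

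\emph{The tail estimate.} Because $W$ is non-decreasing, $\min\{W(B_n^*),W(R_n^*)\}=W(s_n)$, so a union bound over the $d$ draws gives that the conditional probability that the current minority colour receives a ball at step $n+1$ is at most $d\,W(s_n)/(W(s_n)+W(\ell_n))\le d\,W(s_n)/W(n)$. Hence on any event on which $s_n\le K$ for all $n\ge N$, the conditional expected number of subsequent additions to the minority colour is $\le d\,W(K)\sum_{n\ge N}W(n)^{-1}<\infty$ by \eqref{strongreinforce} (and this vanishes as $N\to\infty$). Thus, if $s_n$ is bounded, the minority colour is added finitely often, which is $\mathcal M$; the same bound will also serve to turn a ``commitment'' of the composition into boundedness of $s_n$.

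\emph{The escape.} It remains to show $\PP_1^W(s_n\to\infty)=0$, i.e.\ that the aggregate composition cannot stay balanced. One has $\EE[D_{n+1}-D_n\mid\mathcal G_n]=d(2q_n-1)=d\,\dfrac{W(B_n^*)-W(R_n^*)}{W(B_n^*)+W(R_n^*)}$, which has the same sign as $D_n$ since $W$ is non-decreasing, so $|D_n|$ is a submartingale whose drift points away from the tie, while the conditional variance of $D_{n+1}$ equals $4d\,q_n(1-q_n)\le d$, which keeps the process from freezing at a balanced value. Concretely, the normalised composition $Z_n:=B_n^*/(dn+C)$ is a stochastic approximation with steps of order $n^{-1}$ and drift proportional to $q_n-Z_n$, and its balanced equilibrium $Z=\tfrac12$ is linearly \emph{unstable}: this is where strong reinforcement is used, since $\sum 1/W<\infty$ with $W$ non-decreasing forces $W(n)/n\to\infty$, so the increments of $W$ around the balance point dominate the relevant linear term. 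A non-convergence-to-unstable-equilibria argument (equivalently, a drift/exit-time analysis of $D_n$, or of $\log\!\bigl(W(B_n^*)/W(R_n^*)\bigr)$, in the spirit of the proof of Theorem \ref{thm32super}) then gives $Z_n\to0$ or $Z_n\to1$ a.s., hence $s_n=o(n)$ a.s. Finally a bootstrap upgrades this to $s_n$ bounded: once $Z_n$ stays above $\tfrac12$ from some random time on, the conditional probabilities $d\,W(s_n)/W(n)$ of reinforcing the minority become summable along the trajectory, so by the conditional Borel--Cantelli lemma the minority colour is added only finitely often. Altogether $\PP_1^W(s_n\to\infty)=0$, i.e.\ $\PP_1^W(\mathcal M)=1$.

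I expect the escape step to be the main obstacle: escaping a linearly unstable equilibrium of a stochastic approximation is delicate in general, and here the vector field is non-autonomous (for non-polynomial $W$ the map $z\mapsto W(Nz)/(W(Nz)+W(N(1-z)))$ genuinely depends on the scale $N$, and $W(u)/u$ need not be monotone), so one must either invoke a robust form of the non-convergence principle or run a hands-on second-moment/hitting-time estimate for $D_n$. The subsequent commitment step is the Rubin-type ``rich get richer'' phenomenon and uses $\sum 1/W(n)<\infty$ in full strength; it is also where the generalisation enters, since the only consequences of ``$W$ non-decreasing'' that the argument actually needs are the identity $\min\{W(a),W(b)\}=W(\min\{a,b\})$ (for the tail estimate) and the correct sign of $2q_n-1$ (for the drift) — and both survive, up to multiplicative constants, under natural relaxations of monotonicity (e.g.\ $W$ sandwiched between a non-decreasing sequence and a constant multiple of it, with $\sum 1/W<\infty$).
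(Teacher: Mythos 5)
Your reduction to showing $s_n:=\min(B_n^*,R_n^*)$ is a.s.\ bounded is correct (when $p=1$ the aggregate $(B_n^*,R_n^*)$ is a $d$-step Friedman-type urn, and $\mathcal M=\{s_n\ \text{bounded}\}$). But both of the steps you use to prove this have genuine gaps. First, the ``escape'' step: for a general non-decreasing $W$, $q_n=W(B_n^*)/(W(B_n^*)+W(R_n^*))$ is \emph{not} a function of $Z_n=B_n^*/(dn+C)$ — it depends on the scale $dn+C$ — so there is no autonomous limiting vector field, no equilibrium, and no linearization, and the non-convergence-to-unstable-equilibria theorems you gesture at (Pemantle, Tarrès, Benaïm) simply do not apply. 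The heuristic ``$W(n)/n\to\infty$ gives instability'' would have to be replaced by a bespoke hitting-time/drift estimate on $D_n=B_n^*-R_n^*$; you have not supplied one, and it is not straightforward for non-polynomial $W$.

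Second, and more seriously, the bootstrap from $s_n=o(n)$ to $s_n$ bounded is circular. On $\{Z_n>\tfrac12,\ \forall n\ge N\}$, the conditional probability of reinforcing the minority is $\le d\,W(s_n)/W(\ell_n)$, but $s_n$ itself grows by up to $d$ whenever this happens, so $W(s_n)$ need not stay bounded; e.g.\ if $W(n)=n^2$ and $s_n\asymp n/\log n$ (compatible with $Z_n\to1$), then $W(s_n)/W(\ell_n)\asymp(\log n)^{-2}$ is \emph{not} summable. Indeed, the conditional Borel--Cantelli lemma is an a.s.\ equivalence $\{A_n\ \text{i.o.}\}=\{\sum_n\PP(A_n\mid\mathcal G_n)=\infty\}$, so on the event you want to exclude ($s_n\to\infty$) the sum is automatically infinite — you would be assuming the conclusion. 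This is precisely the ``rich get richer'' step that, already for $d=1$, one handles with Rubin's exponential-clock construction rather than a discrete-time Borel--Cantelli bound. The paper's proof takes exactly that route, generalised to $d\ge 2$ via a continuous-time embedding whose rates are refreshed only at multiples of $d$ (``time delays''): at a stopping time where one colour leads every other by $\ge d$ (guaranteed i.o.\ by applying conditional Borel--Cantelli to an event with probability bounded below, not to the minority reinforcements), the total remaining clock time for the leader is a.s.\ finite by $\sum 1/W<\infty$, monotonicity of $W$ plus exchangeability give a conditional win probability $\ge 1/N_c$, and Lévy's $0$--$1$ law closes the argument; this sidesteps both of the difficulties above.
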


We see from (\ref{Davissumcond}) that for $d=1$, the monotonicity assumption is not needed. It was conjectured in \cite{launay2012urns} that this assumption is also redundant for the cases $d\geq 2$. In this work, we show a generalization of Theorem \ref{stronincfix} to a larger class of strong reinforcement sequences.

We will not limit ourselves to two-color urns. In this case, it is convenient to assume that we have only a single urn of $N_c$-color balls where $N_c\geq 2$. Let $N_n(i)$ be the number of balls of the $i$-th color in this urn at time $n$, and write $N_n:=(N_n(i))_{1\leq i\leq N_c}$. Without loss of generality, we assume that $(N_0(i))_{1\leq i\leq N_c}=(a_i)_{1\leq i\leq N_c}$ are positive integers. For any $n\in \NN$, conditional on $\mathcal{G}_n:=\sigma(N_m:m\leq n)$, the law of $N_{n+1}-N_n$ is given by the multinomial distribution 
$$M(d; (p_1,p_2,\cdots, p_{N_c})), \quad \text{where}\ p_i:= \frac{W(N_n(i))}{\sum_{j=1}^{N_c} W(N_n(j))}.$$ 
One can easily see that the process $(N_n)_{n\in \NN}$ is an IUM with reinforcement sequence $\{W(n)\}_{n\in \NN}$ and parameter $p=1$. The event monopoly is then given by
$$
\mathcal{M}=\left\{\exists i \in \{1,2,\cdots, N_c\},\ s.t.\ N_{n+1}(i)=N_{n}(i)+d \text { for all large }n\right\}.
$$

\begin{theorem}
    \label{wnnewprop}
    One has $\PP_1^W(\mathcal{M})=1$ if $\{W(n)\}_{n\in \NN}$ is a strong reinforcement sequence that satisfies one of the following conditions:  \\ 
    (i) There exists a positive constant $C$ such that for all $n\geq 1$,
     \begin{equation}
         \label{Wnseccond}
           W(n)\delta_n \leq C, \ \text{where}\ \delta_n:=\sum_{k=n}^{\infty}\left|\frac{1}{W(k)}-\frac{1}{W(k+1)}\right|.
     \end{equation} 
     (ii) For $n\geq 1$, let $A_n:=\sum_{i=n}^{\infty}1/W(i)^2$. One has
     \begin{equation}
         \label{Wn2Anlarge}
        \limsup_{n\to \infty}\frac{\delta_n}{ \sqrt{A_n}}<\frac{1}{64(d-1)}, \quad \text{and}\quad \lim_{n\to \infty} \frac{1}{W(n) \sqrt{A_n}} =0.
     \end{equation}
 \end{theorem}
 \begin{remark}
    (I) If $\{W(n)\}_{n\in \NN}$ is non-decreasing and satisfies (\ref{strongreinforce}), then (\ref{Wnseccond}) holds with $C=1$. Thus, Theorem \ref{wnnewprop} generalizes Theorem \ref{stronincfix}.  \\
    (II) In either case, we require that $\delta_n$, the total variation of $\{1/W(k)\}_{k\geq n}$, is relatively small. For example, if $W(n)=(2+(-1)^n)n^2$ or $W(n)=e^{(2+(-1)^n)n}$, then neither (\ref{Wnseccond}) nor (\ref{Wn2Anlarge}) is satisfied. It is worth mentioning that similar conditions and examples have appeared in the study of strongly edge-reinforced random walks, see \cite{MR2349575}.  \\
    (III) Each condition cannot be derived from the other. By (I), $\{e^n\}_{n\in \NN}$ satisfies (\ref{Wnseccond}) but does not satisfy (\ref{Wn2Anlarge}). On the other hand, one can check by the Cauchy-Schwarz inequality that if 
    $$\sum_{n=1}^{\infty}\left(\frac{W(n+1)-W(n)}{W(n)}\right)^2<\infty,$$  
    then (\ref{Wn2Anlarge}) is satisfied, see e.g. the proof of \cite[Corollary 4]{MR2349575}. In particular, if $W(n)=2n^2+(-1)^nn^{4/3}$, then (\ref{Wn2Anlarge}) is satisfied but (\ref{Wnseccond}) is not satisfied.
 \end{remark}

   \section{Introduction to the proofs and the techniques}
\label{sectechniques}

\subsection{Notation}

  We let $C(a_1,a_2,\cdots,a_k)$ denote a positive constant depending only on real variables $a_1, a_2, \ldots, a_k$ and let $C$ denote a universal positive constant, which usually means that $C(a_1,a_2,\cdots,a_k)$ and $C$ do not depend on $n$. 
  
  For a real-valued function $h$ and a $[0,\infty)$-valued function $g$, we write $h(x)=O(g(x))$ as $x\to \infty$, resp. $x\to 0$, if there exist positive constants $C$ and $x_0$ such that $|h(x)| \leq C g(x)$ for all $x \geq x_0$, resp. $|x|\leq x_0$. 

 We let $\NN_+:=\NN \cap (0,\infty)$. We let $\|\cdot\|$ denote the usual Euclidean norm. We write $X \sim \operatorname{Exp}(\lambda)$ if a random variable $X$ has an exponential distribution with rate $\lambda$.

\subsection{Stochastic approximation algorithms} 

Under $\PP_p^{(\alpha)}$, we show that $((x_n,y_n))_{n\in \NN}$ defined by (\ref{xinportiondef}) is generated by a stochastic approximation algorithm (Robbins-Monro algorithm), and is closely related to the following (deterministic) planar nonlinear system 
\begin{equation}
    \label{odedxFm}   
    \left(\frac{dx}{dt},\frac{dy}{dt}\right)=F^{(\alpha)}_{p}(x,y)
\end{equation}
where $F^{(\alpha)}_{p}=(F_{p,1}^{(\alpha)},F_{p,2}^{(\alpha)})$ is a vector function on $[0,1]^2$ defined by
\begin{equation}
    \label{Falphapdef}
    \left\{\begin{aligned} F_{p,1}^{(\alpha)}(x,y)&:=-x +\frac{(1-p)x^{\alpha}}{x^{\alpha}+(1-x)^{\alpha}}+\frac{p(x+y)^{\alpha}}{(x+y)^{\alpha}+(2-x-y)^{\alpha}},  \\ F_{p,2}^{(\alpha)}(x,y)&:=-y +\frac{(1-p)y^{\alpha}}{y^{\alpha}+(1-y)^{\alpha}}+\frac{p(x+y)^{\alpha}}{(x+y)^{\alpha}+(2-x-y)^{\alpha}}.
    \end{aligned} \right.
\end{equation}
For an introduction to stochastic approximation algorithms, see e.g. \cite{MR1767993, MR1082341, MR2442439, MR1485774}. 

\begin{proposition}
    \label{propstocappxy}
    For any $\alpha\geq 1$ and $p\in [0,1]$, under $\PP_p^{(\alpha)}$, $((x_n,y_n))_{n\in \NN}$ defined by (\ref{xinportiondef}) satisfies the following recursion:
    \begin{equation}
        \label{stoappxy}
           (x_{n+1},y_{n+1})-(x_n,y_n)=\frac{1}{n+1}(F^{(\alpha)}_{p}(x_n,y_n)+\varepsilon_{n+1}+r_{n+1}), \quad n\in \NN,
    \end{equation}
    where $(\varepsilon_{n+1})_{n\in \NN}$ and $(r_{n+1})_{n\in \NN}$ are adapted sequences such that for all $n\in \NN$,
    $$
\EE(\varepsilon_{n+1}\mid \GG_n)=(0,0), \quad \|\varepsilon_{n+1}\| \leq 2, \quad \|r_{n+1}\| \leq \frac{C}{n+1},
    $$
where $C$ is a positive constant.
\end{proposition}

For the system (\ref{odedxFm}) with initial condition $(x(0),y(0)) \in [0,1]^2$, the existence and uniqueness of the solution follow from the Lipschitz property of $F^{(\alpha)}_{p}$ and Picard's theorem. Note that the solution satisfies $(x(t),y(t)) \in [0,1]^2$ for all $t\geq 0$. 

We shall study the asymptotic behavior of the solution to the system (\ref{odedxFm}).
It turns out that (\ref{odedxFm}) is a gradient system. The proof of the following result is direct and is omitted here.

\begin{proposition}
    \label{stogradsys}
    For any $\alpha \geq 1$, define $L^{(\alpha)}_{p}: [0,1]^2 \to \RR$ by
    \begin{equation}
        \label{defLm}
       L^{(\alpha)}_{p}(x,y)=(1-p)\left(G^{(\alpha)}(x)+G^{(\alpha)}(y)\right)+2pG^{(\alpha)}\left(\frac{x+y}{2}\right)-\frac{x^2+y^2}{2} ,
    \end{equation}
    where 
    $$
    G^{(\alpha)}(t):=\int_0^t \frac{u^{\alpha}}{u^{\alpha}+(1-u)^{\alpha}}du, \quad t \in [0,1].
    $$   
Then, we have $\operatorname{grad}L^{(\alpha)}_{p}=F^{(\alpha)}_{p}$.
\end{proposition}
\begin{example}[$\alpha=1,2$]
    \label{examLalpha}
  One has $L^{(1)}_p(x,y)= -p(x-y)^2/4$ and 
      $$
      \begin{aligned}
          L^{(2)}_p(x,y)&=\frac{1-p}{4} \log (x^2 +(1-x)^2)+\frac{1-p}{4} \log (y^2 +(1-y)^2) \\
          &+\frac{p}{2}\log ((x+y)^2+(2-x-y)^2)-p\log 2-\frac{x^2}{2}+\frac{x}{2}-\frac{y^2}{2}+\frac{y}{2}.
      \end{aligned}
      $$
  \end{example}

  A point $(x,y) \in [0,1]^2$ is called an equilibrium of (\ref{odedxFm}) if $F^{(\alpha)}_{p}(x,y)=0$. Let $\Lambda^{(\alpha)}_{p}$ be the set of all equilibrium points. Observe that $\Lambda^{(1)}_p=\{(x,x): x\in [0,1]\}$ for any $p>0$. We prove that $\Lambda^{(\alpha)}_{p}$ is a finite set for $\alpha>1$.  The cases $\alpha=3,p=0.4$ and $\alpha=5,p=0.3$ are plotted in Figure \ref{mpfinite} where $\Lambda^{(\alpha)}_{p}$ is the set of intersection points of the two curves.
\begin{figure}[t]
    \centering
    \subfigure[$\alpha=3, p=0.4$]{
    \includegraphics[width=6.6cm]{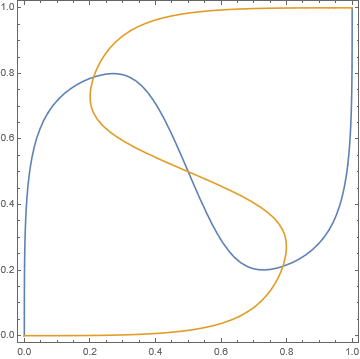}
    }
    \quad
    \subfigure[$\alpha=5, p=0.3$]{
    \includegraphics[width=6.6cm]{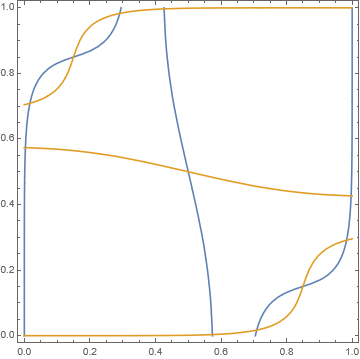}
    }
    \caption{$F^{(\alpha)}_{p,1}(x,y)=0$ in blue and $F^{(\alpha)}_{p,2}(x,y)=0$ in orange} \label{mpfinite}
\end{figure}

\begin{proposition}
    \label{equifinite}
   $\Lambda^{(\alpha)}_{p}$ includes $(0,0), (1/2,1/2),(1,1)$. Moreover, for $\alpha > 1$, one has,\\
   (i) if $p=0$, then $\Lambda^{(\alpha)}_{0} = \{0,1/2,1\} \times \{0,1/2,1\};$ \\
   (ii) if $p\in [1/2, (\alpha-1)/\alpha)$ (this is possible only when $\alpha>2$), then 
   $$
   \Lambda^{(\alpha)}_{p}=\{(0,0),(\frac{1}{2},\frac{1}{2}), (1,1),(u_{\alpha, p},1-u_{\alpha, p}),(1-u_{\alpha, p},u_{\alpha, p})\},
   $$
   where $u_{\alpha, p}$ is defined in Theorem \ref{xnynu}; \\
   (iii) if $p\geq (\alpha-1)/\alpha$, then $\Lambda^{(\alpha)}_{p}= \{(0,0), (1/2,1/2),(1,1)\};$ \\
     (iv) if $p>0$, then $\Lambda^{(\alpha)}_{p}$ is finite and
    $$\Lambda^{(\alpha)}_{p}\backslash \{(0,0), (\frac{1}{2},\frac{1}{2}),(1,1)\} \subset \left((0,\frac{1}{2})\times (\frac{1}{2},1)\right)  \cup \left((\frac{1}{2},1) \times (0,\frac{1}{2})\right).$$ 
\end{proposition}

 Propositions \ref{stogradsys}, \ref{equifinite} and Example \ref{examLalpha} then imply that the solution to (\ref{odedxFm}) converges to $\Lambda^{(\alpha)}_{p}$. More precisely,

    \begin{itemize}
        \item If $\alpha=1$ and $p>0$, then $x(t)+y(t)\equiv x(0)+y(0)$ and $L_p^{(1)}(x(t),y(t))$ increases to 0 as $t\to \infty$. In particular, both $x(t)$ and $y(t)$ converges to $(x(0)+y(0))/2$.
        \item If $\alpha>1$, $(x(t),y(t))$ converges to an equilibrium as $t\to \infty$.
    \end{itemize}

    \begin{definition}[Asymptotically stable equilibria]  
        For $\alpha>1$ and $p\in [0,1]$, an equilibrium $(x,y)$ of (\ref{odedxFm}) is said to be asymptotically stable if $(x,y)$ is a local maximum of $L^{(\alpha)}_{p}$ defined by (\ref{defLm}). We let $\mathcal{E}_{p}^{(\alpha)}$ be the set of asymptotically stable equilibria of (\ref{odedxFm}).
     \end{definition}
     
     Define 
     \begin{equation}
        \label{deffderxm}
       f(t):=\frac{t^{\alpha-1}(1-t)^{\alpha-1}}{[t^{\alpha}+(1-t)^{\alpha}]^2} , \quad t\in [0,1].
    \end{equation}
     Observe that the Jacobian matrix of the system (\ref{odedxFm}) is given by
     $$
     DF^{(\alpha)}_{p}(x,y)=\left(\begin{array}{ll}
         -1+\alpha(1-p)f(x)+\frac{\alpha p}{2}f(\frac{x+y}{2}) &\hspace{1.5cm} \frac{\alpha p}{2}f(\frac{x+y}{2}) \\
         \hspace{1.5cm} \frac{\alpha p}{2}f(\frac{x+y}{2}) & -1+\alpha(1-p)f(y)+\frac{\alpha p}{2}f(\frac{x+y}{2})
         \end{array}\right)
     $$
    which is a real symmetric matrix and thus has two real eigenvalues: 
     \begin{equation}
         \label{lambdapm}
         \begin{aligned}
           \lambda_{\pm}(x,y) &= -1 + \frac{\alpha p}{2}f(\frac{x+y}{2}) + \alpha(1-p)\frac{f(x)+f(y)}{2}\\
            &\pm \frac{1}{2}\sqrt{\alpha^2(1-p)^2(f(x)-f(y))^2+\alpha^2p^2 f^2(\frac{x+y}{2})}   .
         \end{aligned}
     \end{equation}
     Note that for an equilibrium $(x,y)$, if $ \lambda_{+}(x,y)<0$, then $(x,y)\in \mathcal{E}_{p}^{(\alpha)}$; it is called unstable if $\lambda_{+}(x,y) > 0$.
     
     \begin{example}
         \label{examunst}
 $(0,0)$ and $(1,1)$ are asymptotically stable equilibria since $
     \lambda_{+}(0,0)=\lambda_{+}(1,1)=-1$. While $(1/2,1/2)$ is unstable since $
     \lambda_{+}(1/2,1/2)=\alpha-1>0$.
     \end{example}

If $\alpha>1$ and $p$ is sufficiently small, or $p<1/2$ and $\alpha$ is sufficiently large, we prove the existence of asymptotically stable equilibrium.

     \begin{proposition}
        \label{stableexistence}
      (i) For $\alpha>1$ and $p\leq \alpha^{-1}\min((\alpha-1)/6,1/3)$, one has 
        $$
        \lambda_{+}(u_{\alpha, p},1-u_{\alpha, p}) <0,
        $$
        where $u_{\alpha, p}$ is defined in Theorem \ref{xnynu}. \\
        (ii) For $\alpha\geq 3$, if $0<p\leq  1/2-\alpha^{-1}\log \alpha$, then there exists an asymptotically stable equilibrium in $(0,1/2)\times (1/2,1)$. \\
    (iii) Fix $p\in (0,1/2)$, for sufficiently large $\alpha$, there exists $s_{\alpha} \in (0,1/2)\times (1/2,1)$ such that  
    $$\lambda_{+}(s_{\alpha})<0 \quad \text{and} \quad \lim_{\alpha\to \infty}s_{\alpha} = (p,1).$$ 
     \end{proposition}
   
     For the cases $\alpha=3,p=0.1$ and $\alpha=40,p=0.4$, the vector fields generated by (\ref{odedxFm}) are plotted in Figure \ref{psmallvec}. In Figure \ref{Fig3-1}, $(u_{\alpha, p},1-u_{\alpha, p})$ is inside the red circle. Readers can also find an asymptotically stable equilibrium $s_{\alpha}$ near $(0.4, 1)$ in Figure \ref{Fig40-4}.

     \begin{figure}[t]
        \centering
        \subfigure[$\alpha=3, p=0.1$]{
        \includegraphics[width=6.6cm]{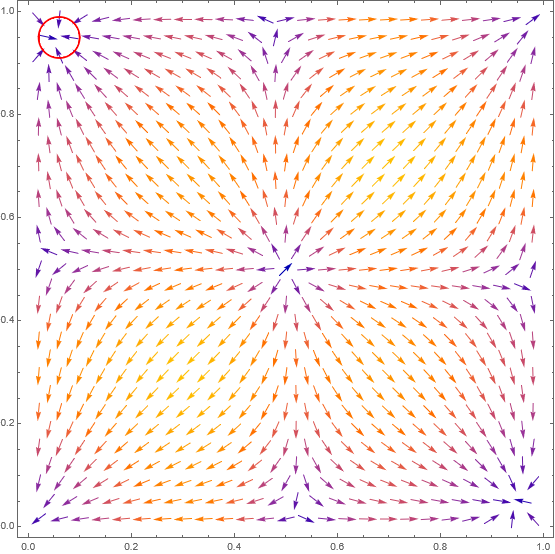}
       \label{Fig3-1}
        }
        \quad
        \subfigure[$\alpha=40, p=0.4$]{
            \includegraphics[width=6.6cm]{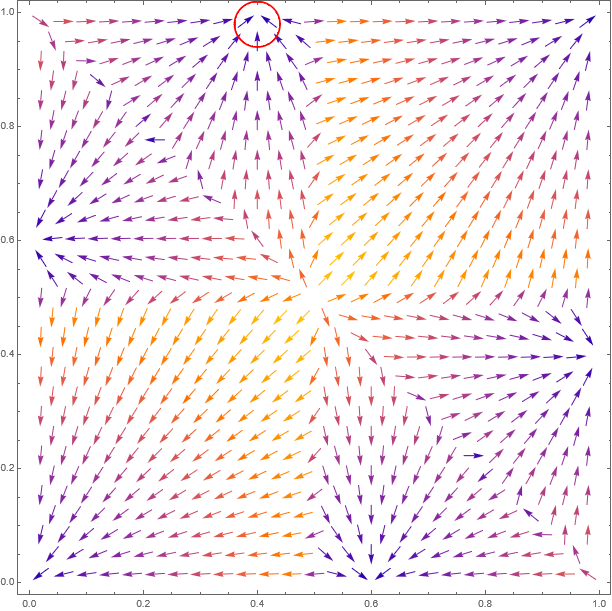}
            \label{Fig40-4}
            }
        \caption{The vector fields generated by (\ref{odedxFm}) for small p or large $\alpha$}\label{psmallvec}
    \end{figure} 

     The following result says that if $p\geq 1/2$, then all the equilibria except $(0,0)$ and $(1,1)$, are unstable, as is illustrated in Figure \ref{p12vecun} for the cases $\alpha=3,p=0.5$ and $\alpha=5,p=0.5$.

     \begin{corollary}
        \label{pgeq12unstalbe}
  Assume that $\alpha>1$ and $p\geq \min(1/2,\alpha^{-1}(\alpha-1))$. If $(x,y) \in \Lambda^{(\alpha)}_{p}\backslash\{(0,0),(1,1)\}$, then $\lambda_+(x,y)>0$.
\end{corollary}

     \begin{proof}
        If $p\in [1/2,\alpha^{-1}(\alpha-1))$ (in particular, $\alpha>2$), then 
        $$
        \lambda_+(u_{\alpha, p},1-u_{\alpha, p})=\lambda_+(1-u_{\alpha, p},u_{\alpha, p})=-1+\alpha p+\alpha(1-p)f(u_{\alpha, p})>\alpha p-1> 0
        $$
        For any $p\in [0,1]$, as shown in Example \ref{examunst}, $\lambda_+(1/2,1/2)>0$.
        Thus, Corollary \ref{pgeq12unstalbe} is a direct consequence of Proposition \ref{equifinite} (ii) and (iii).
        \end{proof}     

     \begin{figure}[t]
        \centering
        \subfigure[$\alpha=3, p=0.5$]{
        \includegraphics[width=6.6cm]{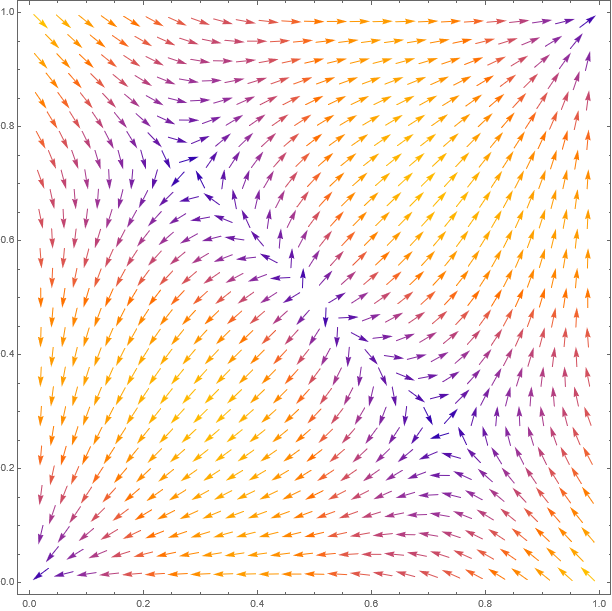}
        }
        \quad
        \subfigure[$\alpha=5, p=0.5$]{
        \includegraphics[width=6.6cm]{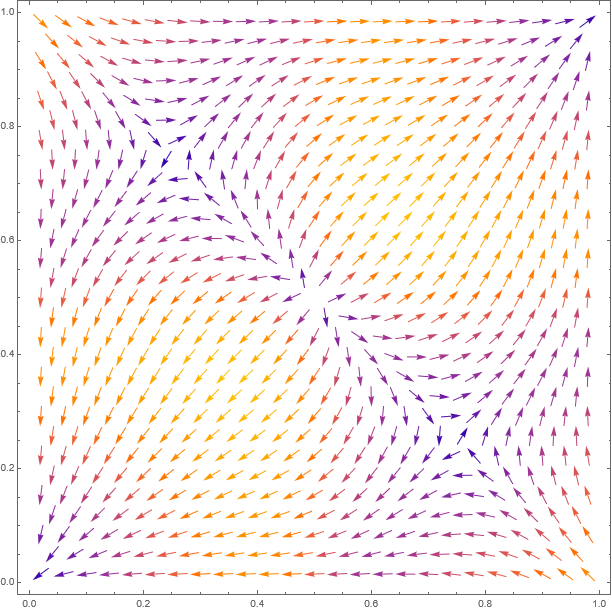}
        }
        \caption{The vector fields generated by (\ref{odedxFm}) for $p\geq 0.5$}\label{p12vecun}
    \end{figure}

     We now sketch the proof for Theorems \ref{xnynu} and \ref{pmlarger12} (the details will be given in Section \ref{stocappsec}): 
     \begin{itemize}
        \item Using stochastic approximation techniques, we show that under $\PP_p^{(\alpha)}$, as in the deterministic case (\ref{odedxFm}), almost surely, the sequence $((x_n,y_n))_{n\in \NN}$ is convergent and the limit belongs to $\Lambda^{(\alpha)}_{p}$. 
        \item  For $\alpha>1$, stochastic approximation theory can also be used to show that $(x_n,y_n)$ converges to any asymptotically stable equilibrium with positive probability, and converges to any unstable equilibrium with probability 0. Theorems \ref{xnynu} (ii) and Theorem \ref{pmlarger12} then follows from Propositions \ref{equifinite}, \ref{stableexistence} and Corollary \ref{pgeq12unstalbe}.
        \item For Theorems \ref{xnynu} (i),  we establish a finer stochastic approximation result for $(z_n)_{n\in \NN}$ in Lemma \ref{lemzn} where $z_n$ is the proportion of black balls in the whole system at time $n$:
        \begin{equation}
            \label{zndefpropB}
            z_n:=\frac{B_n^*}{2n+B_0^*+R_0^*}, \quad n\in \NN.
        \end{equation}
       This would enable us to show that 
            $$\PP_p^{(1)}(\lim_{n\to \infty} z_n \in \{0,1\})=0$$
     \end{itemize}
     
    \subsection{Continuous-time construction with time delays}
\label{constcondelay}

We use a continuous-time embedding technique to prove Theorem \ref{wnnewprop}. As we have mentioned, the case $d=1$ was solved by a continuous-time construction. It is natural to consider whether this technique can be generalized.

For the purpose of the proof, we introduce a new time-lines representation which we call continuous-time construction with time delays.

\begin{equation}
    \label{GNcloops}
    \begin{tikzpicture}[baseline=(current bounding box.center)]
\node   (a) [vertex] {x};
\path[scale=4] 
        (a) edge [loop right, "$e_1$ \ClockFrametrue\clock{1}{30}"]
        (a) edge [out=-30, in=-60, distance=5mm, ->, "$e_2$ \ClockFrametrue\clock{1}{20}"] (a)
        (a) edge [loop above, "$e_{N_c-1}$ \ClockFrametrue\clock{2}{35}"]
        (a) edge [out= 60, in= 30, distance=5mm, ->, "$e_{N_c}$ \ClockFrametrue\clock{3}{30}"] (a);
\draw [densely dotted]  (285:1.5em) arc (285:120:1.5em);
    \end{tikzpicture}
\end{equation}

We let $G$ be a graph as in (\ref{GNcloops}) consisting of a single vertex $x$ and $N_C \geq 2$ self-loops, i.e. the edge set $E=\{e_1,e_2.\cdots, e_{N_c}\}$ and $x \stackrel{e_i}{\sim} x$, $i=1,2,\cdots,N_c$. We shall define a continuous-time jump process $X=(X_t)_{t\geq 0}$ on $G$. Let us first introduce some preliminary notation. 

Let $0=\tau_0<\tau_1<\tau_2<\cdots$ be the hitting times of $X$ to $x$. For each $i \in \{1,2,\cdots,N_c\}$ and $n\in \NN$, let
\begin{equation}
    \label{defZni}
  Z_n(i):=\sum_{k=1}^n \mathds{1}_{\left\{(X_{\tau_{k-1}},X_{\tau_k})=e_i\right\}}+a_i
\end{equation}
be the number of visits to $e_i$ up to time $\tau_n$ plus $a_i \in \NN_+$ with the convention that $Z_0(i)=a_i$. Here $Z_{dn}(i)$ and $a_i$ should be interpreted respectively as the number of balls of the $i$-th color in the urn at time $n$ and the initial number of balls of the $i$-th color (see Proposition \ref{ZkdNin} for a more precise statement). For $n\geq a_i$, let
\begin{equation}
    \label{nvisittimeei}
    \sigma_n(i):=\inf\{\tau_{m}: Z_m(i)\geq n\}
\end{equation}
with the convention that $\inf \emptyset=\infty$. Let $\{\xi^{(i)}_n\}_{1\leq i\leq N_c, n>a_i}$ be independent Exp(1)-distributed random variables. The law of $X$ is defined as follows:

At time $t=0$, on each edge $e_i$ $(1\leq i \leq N_c)$, we launch a timer with a duration $\xi^{(i)}_{a_i+1}/W(a_i)$. When the timer of an edge $e_i$ rings, $X$ jumps to cross $e_i$ instantaneously. If an edge $e_i$ is crossed at time $\tau_n$ such that $kd < n \leq (k+1)d$ for some $k\in \NN$, then we launch a new timer on this edge with a duration $\xi^{(i)}_{Z_n(i)+1}/W(Z_{kd}(i))$. For $k\geq 1$, at time $\tau_{kd}$, we update the denominators (i.e. the rates) for all the timers: for $i \in \{1,2,\cdots,N_c\}$, if the timer on $e_i$ has run a time of $t^{(i)}_k$, then we reset the timer such that the remaining time becomes 
\begin{equation}
    \label{updatedeno}
    \frac{\xi^{(i)}_{Z_{kd}(i)+1}-W(Z_{(k-1)d}(i))t^{(i)}_k}{W(Z_{kd}(i))}.
\end{equation}

\begin{remark}
    \label{updateremark}
    (i) Just before we reset the timers, the remaining time of the timer on $e_i$ is 
    $$\frac{\xi^{(i)}_{Z_{kd}(i)+1}}{W(Z_{(k-1)d}(i))}-t^{(i)}_k.$$
    (ii) If, for some $j \in \{1,2,\cdots,N_c\}$, $e_j$ is not crossed during the time interval $(\tau_{(k-1)d},\tau_{kd}]$, then $Z_{(k-1)d}(j)=Z_{kd}(j)$ so that there is nothing to change for the timer on $e_j$. \\
 (iii) If $X$ jumps to cross $e_j$ for some $j \in \{1,2,\cdots,N_c\}$ at time $\tau_{kd}$, we will launch a new timer on $e_j$ and thus $t^{(j)}_k=0$. We may simply launch a new timer on $e_j$ with a duration $\xi^{(j)}_{Z_{kd}(j)+1}/W(Z_{kd}(j))$ rather than $\xi^{(j)}_{Z_{kd}(j)+1}/W(Z_{(k-1)d}(j))$.\\
    (iv) The timer which corresponds to $\xi^{(i)}_{n+1}$ may run at different rates as time changes. All the possible denominators (rates) are 
    $$
W(n-\ell), \quad \ell \in \{0,1,\cdots,d-1\} \ \text{such that}\ \ell \leq n-a_i,
    $$
    due to the time delays. Note that we may update this timer at jumping times but we will never launch a new one until it rings. Recall $\sigma_n(i)$ defined in (\ref{nvisittimeei}). If $\sigma_{n+1}(i)<\infty$, the total time this timer needs to run is simply $\sigma_{n+1}(i)-\sigma_{n}(i)$, in which case we can write
    \begin{equation}
        \label{decomsigman}
         \sigma_{n+1}(i)-\sigma_{n}(i)=\sum_{\ell=0}^{(d-1)\wedge (n-a_i)}\frac{b_{\ell}}{W(n-\ell)},
    \end{equation}
    where $b_{\ell}\geq 0$ and $\sum_{\ell=0}^{(d-1)\wedge (n-a_i)} b_{\ell}=\xi^{(i)}_{n+1}$.
\end{remark}

We denote the natural filtration of $X$ by $(\FF_t)_{t\geq 0}$, i.e. $\FF_t:=\sigma(X_s: 0\leq s\leq t)$. Recall the process $(N_k(i))_{1\leq i \leq N_c, k\in \NN}$ defined in Section \ref{urnsimulp1}.
\begin{proposition}
    \label{ZkdNin}
    Let $X$ be the jump process defined above. Then, 
    $$
    (Z_{kd}(i))_{1\leq i \leq N_c, k\in \NN} \stackrel{\mathcal{L}}{=} (N_k(i))_{1\leq i \leq N_c, k\in \NN}
    $$
    In particular, we may define $X$ and the IUM on the same probability space such that a.s.
\begin{equation}
    \label{ZkdNki}
    (Z_{kd}(i))_{1\leq i \leq N_c, k\in \NN} = (N_k(i))_{1\leq i \leq N_c, k\in \NN}
\end{equation}
\end{proposition}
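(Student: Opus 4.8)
The plan is to identify the discretely sampled process $(Z_{kd}(i))_{1\le i\le N_c,\,k\in\NN}$ as a Markov chain having exactly the transition kernel that defines $(N_k(i))_k$. Both chains start from $(a_i)_i$, so it suffices to prove that for every $k\in\NN$ one has $\tau_{kd}<\infty$ almost surely and, conditionally on $\FF_{\tau_{kd}}$, the increment $\bigl(Z_{(k+1)d}(i)-Z_{kd}(i)\bigr)_{1\le i\le N_c}$ has the multinomial law $M\bigl(d;\,W(Z_{kd}(i))/\textstyle\sum_j W(Z_{kd}(j)),\ 1\le i\le N_c\bigr)$. Indeed $\sigma\bigl(Z_0(\cdot),\dots,Z_{kd}(\cdot)\bigr)\subseteq\FF_{\tau_{kd}}$ and this conditional law depends on $\FF_{\tau_{kd}}$ only through $Z_{kd}(\cdot)$, which gives both the Markov property and the kernel, hence the claimed equality in law; the ``same probability space'' clause then follows by simply setting $N_k(\cdot):=Z_{kd}(\cdot)$.

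The whole argument rests on one invariant, which I would establish by induction on $k$: \emph{conditionally on $\FF_{\tau_{kd}}$, the remaining fuels of the $N_c$ active timers at time $\tau_{kd}$ are i.i.d.\ $\mathrm{Exp}(1)$}, where the fuel of the timer currently carrying $\xi^{(i)}_{Z_{kd}(i)+1}$ means the part of that exponential not yet consumed. For $k=0$ this is immediate, the remaining fuels being the independent $\mathrm{Exp}(1)$ variables $\xi^{(i)}_{a_i+1}$. Assuming the invariant at level $k$: during the block $\bigl(\tau_{kd},\tau_{(k+1)d}\bigr]$ the denominators are frozen at $W(Z_{kd}(i))\in(0,\infty)$ (positive since $Z_{kd}(i)\ge a_i\ge1$), and after each crossing of $e_i$ a fresh $\mathrm{Exp}(1)$ fuel is launched on $e_i$ at that same rate; hence on this block the edges fire as $N_c$ independent Poisson processes of rates $W(Z_{kd}(i))$ and the crossings of $X$ are precisely the points of their superposition. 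In particular $\tau_{(k+1)d}<\infty$ a.s., the labels of the first $d$ such points are i.i.d.\ with $\PP(\text{label}=i)=W(Z_{kd}(i))/\sum_j W(Z_{kd}(j))$, and their counts $\bigl(Z_{(k+1)d}(i)-Z_{kd}(i)\bigr)_i$ are therefore multinomial as required.

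To close the induction I would recover the invariant at level $k+1$. At time $\tau_{(k+1)d}$ the active timer on $e_i$ carries $\xi^{(i)}_{Z_{(k+1)d}(i)+1}$, and by the fuel-conserving design of the reset rule (\ref{updatedeno}) --- cf.\ Remark \ref{updateremark} and the decomposition (\ref{decomsigman}), which exhibits the time between consecutive visits to $e_i$ as a nonnegative combination of this single exponential burnt at various frozen rates --- the amount of it already consumed is an $\FF_{\tau_{(k+1)d}}$-measurable quantity $c_i$, while the trajectory of $X$ up to $\tau_{(k+1)d}$ (a deterministic functional of the family $\{\xi^{(i')}_n\}$) depends on $\xi^{(i)}_{Z_{(k+1)d}(i)+1}$ only through the one-sided constraint $\{\xi^{(i)}_{Z_{(k+1)d}(i)+1}>c_i\}$, since the timer has not yet rung and the future is not yet revealed. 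As the exponentials carried by the active timers are mutually independent and independent of every exponential already spent in full, memorylessness yields that, conditionally on $\FF_{\tau_{(k+1)d}}$, the remaining fuels $\bigl(\xi^{(i)}_{Z_{(k+1)d}(i)+1}-c_i\bigr)_i$ are i.i.d.\ $\mathrm{Exp}(1)$.

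The one place where care is genuinely needed is this last step: checking that the time-delay reset (\ref{updatedeno}) is exactly the rescaling that neither creates nor destroys fuel, so that each $\xi^{(i)}_n$ is consumed exactly once (possibly spread over several frozen-rate segments, with total consumption $\xi^{(i)}_n$), and describing $\FF_{\tau_{kd}}$ precisely as ``the exact values of the exhausted exponentials together with lower bounds on the active ones'' --- which is what licenses the repeated appeal to the memoryless property. The block-by-block reduction and the competing-exponentials computation are then standard, and one does not need $X$ to be non-explosive, only $\tau_{kd}<\infty$ for each fixed $k$, which the induction already supplies.
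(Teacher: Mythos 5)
Your proposal is correct and takes essentially the same route as the paper: the paper's proof is the single observation that the reset rule~(\ref{updatedeno}) together with memorylessness makes the post-update remaining time on $e_i$ an independent $\operatorname{Exp}(1)$ rescaled by $W(Z_{kd}(i))$, after which it delegates the block-by-block reduction and the competing-exponentials computation to Lemma~3.1 of \cite{tarres2011localization}. You have simply expanded the same idea into a self-contained argument --- the induction on the i.i.d.-residual-fuel invariant, the frozen rates within a block giving $N_c$ independent Poisson processes whose first $d$ superposition points produce the multinomial increment, and the memoryless step that propagates the invariant --- all of which matches what the cited lemma supplies.
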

\begin{proof}
   Note that conditional on $\FF_{\tau_{kd}}$, by the memoryless property of exponentials, (\ref{updatedeno}) has the same distribution as $\xi/W(Z_{kd}(i))$ where $\xi$ is an Exp(1)-distributed random variable. The rest of the proof is similar to that of \cite[Lemma 3.1]{tarres2011localization}.
\end{proof}

As we explained in Remark \ref{updateremark}, unlike the continuous-time construction in the proof of \cite[Theorem 3.6]{MR2282181}, at time $\tau_n \in (\tau_{kd},\tau_{(k+1)d})$, we keep using the data we collect at time $\tau_{kd}$ (i.e. $(Z_{kd}(i))_{1\leq i \leq N_c}$) to launch new timers. This justifies its name continuous-time construction with time delays. It is a powerful technique that allows us to give a very short proof of a multi-color ($N_c\geq 2$) version of Theorem \ref{stronincfix}. 

\begin{proof}[A new proof of Theorem \ref{stronincfix} with $N_c\geq 2$]
    Conditional on $\FF_{nd}$, if $Z_{nd}(i)= \max_{1\leq j\leq N_c}\{Z_{nd}(j)\}$ for some $i$, then the probability that $Z_{(n+1)d}(i)=Z_{nd}(i)+d$ (i.e. we add $d$ balls of the relative major color at time $n+1$) is lower bounded by $(1/N_c)^d$ since $\{W(n)\}_{n\in \NN}$ is non-decreasing. By the conditional Borel-Cantelli lemma, see e.g. \cite{MR0837655}, a.s. such an event occurs for infinitely many $n$, and thus, there is an infinite sequence of finite stopping times $\{\tau_{n_kd}\}_{k\geq 1}$ such that at time $\tau_{n_kd}$, there exists $i_k \in \{1,2,\cdots,N_c\}$ such that
    \begin{equation}
        \label{Znkddad}
        Z_{n_kd}(i_k) \geq d + \max_{j\neq i_k}\{Z_{n_kd}(j)\}.
    \end{equation}
    By (\ref{decomsigman}), for $j=1,2,\cdots,N_c$, if $\sigma_n(j)<\infty$,
    \begin{equation}
        \label{ineholdingtimeincrease}
        \frac{\xi_{n}^{(j)}}{W(n-1)} \leq \sigma_n(j)-\sigma_{n-1}(j) \leq \frac{\xi_{n}^{(j)}}{W(n-d)}, \quad \forall n\geq a_j+d.
    \end{equation}
   As is mentioned in the proof of Proposition \ref{ZkdNin}, conditional on $\FF_{\tau_{n_kd}}$, the remaining time of the timer on $e_j$ has the distribution of an independent copy of $\xi/W(Z_{n_kd}(j))$ where $\xi$ is an Exp(1)-distributed random variable. By a slight abuse of notation, the time remaining is denoted by $\xi^{(j)}_{Z_{n_kd}(j)+1}/W(Z_{n_kd}(j))$. 
    By symmetry and (\ref{Znkddad}), conditional on $\FF_{\tau_{n_kd}}$, with probability at least $1/N_c$,
    $$
    \sum_{n=Z_{n_kd}(i_k)+1}^{\infty}\frac{\xi_{n}^{(i_k)}}{W(n-d)}  < \min_{j\neq i_k}\left\{ \sum_{n=Z_{n_kd}(j)+1}^{\infty}\frac{\xi_{n}^{(j)}}{W(n-1)}\right\}
    $$
    (note that all the sums above all have continuous distributions) and in particular, by (\ref{ineholdingtimeincrease}),
    $$
\lim_{n\to \infty}\sigma_n(i_k)-\tau_{n_kd}<  \min_{j\neq i_k}\{ \lim_{n\to \infty}\sigma_n(j)-\tau_{n_kd} \}.
    $$
    That is, the remaining time needed to visit $e_{i_k}$ i.o. is strictly less than that needed for any other edge. By (\ref{ZkdNki}) in Proposition \ref{ZkdNin}, this is equivalent to saying that only balls of color $i_k$ are taken infinitely often (after time $n_k$). Therefore, for any $k\geq 1$,
       $$
    \PP_1^{W}(\mathcal{M}\mid \FF_{\tau_{n_kd}}) \geq  \frac{1}{N_c}, \quad \forall k\geq 1.
       $$
    We then conclude that $\PP_1^W(\mathcal{M})=1$ by Levy's 0-1 law.
\end{proof}

We will show in Section \ref{ctdelays} that one can apply a similar argument if 
$\delta_n$ is small in the sense of (\ref{Wnseccond}) or (\ref{Wn2Anlarge}), which enables us to prove Theorem \ref{wnnewprop}.

\subsection{Coupling} 

Proposition \ref{weakstrongdomi} is proved by coupling. Let $(B_n,R_n)_{n\in \NN}$ be an IUM with reinforcement sequence $\{W(n)\}_{n\in \NN}$ and interaction parameter $p$. We define a new urn process $(\tilde{B}_n,\tilde{R}_n)_{n\in \NN}$ as follows, where $\tilde{B}_n(i)$ and $\tilde{R}_n(i)$ ($i=1,2$) denote the number of black and red balls in the $i$-th urn at time $n$, respectively. 

Similarly, we write $\tilde{B}_n:=(\tilde{B}_n(1),\tilde{B}_n(2))$, $\tilde{R}_n:=(\tilde{R}_n(1),\tilde{R}_n(2))$ and $\tilde{B}_n^{*}:=\tilde{B}_n(1)+\tilde{B}_n(2)$, $\tilde{R}_n^{*}:=\tilde{R}_n(1)+\tilde{R}_n(2)$. The initial composition is given by $(\tilde{B}_0,\tilde{R}_0) \in \NN_+^{4}$. For any $n\in \NN$, at time step $2n+1$, we add a black ball to the first urn with probability 
$$
\frac{W(\tilde{B}_{2n}(1))}{W(\tilde{B}_{2n}(1))+W(\tilde{R}_{2n}^{*})},
$$
otherwise, we add a red ball to the first urn; at time step $2n+2$, we add a black ball to the second urn with probability 
$$
\frac{W(\tilde{B}_{2n}(2))}{W(\tilde{B}_{2n}(2))+W(\tilde{R}_{2n+1}^{*})},
$$
otherwise, we add a red ball to the second urn.

    In words, red balls are always drawn from all the urns combined, black balls are always drawn from the urn alone. Compared to the IUM, it is natural to expect that there will be more red balls and fewer black balls if $\{W(n)\}_{n\in \NN}$ is non-decreasing.

    \begin{lemma}
        \label{couplcomparenewurn}
        Assume that $\{W(n)\}_{n\in \NN}$ is non-decreasing and $(\tilde{B}_0,\tilde{R}_0)=(B_0,R_0)$. Then we can define the two urn processes $(B_n,R_n)_{n\in \NN}$ and $(\tilde{B}_n,\tilde{R}_n)_{n\in \NN}$ above on the same probability space such that 
        \begin{equation}
            \label{coupleine}
             \tilde{R}_{2n}(i) \geq R_n(i),\ \tilde{B}_{2n}(i) \leq B_n(i), \quad \forall n\in \NN, i=1,2.
        \end{equation}
    \end{lemma}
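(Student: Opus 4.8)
The plan is to realize both urn processes on a single probability space, driven by one array of i.i.d.\ uniforms, and to prove (\ref{coupleine}) by induction on $n$. First note that in both models urn $i$ receives exactly one ball per ``its own'' time step, so by (\ref{BnRnsum}) and the hypothesis $(\tilde B_0,\tilde R_0)=(B_0,R_0)$ we have $\tilde B_{2n}(i)+\tilde R_{2n}(i)=B_0(i)+R_0(i)+n=B_n(i)+R_n(i)$ for all $n$ and $i=1,2$. Hence the two inequalities in (\ref{coupleine}) are equivalent, and it suffices to establish $\tilde B_{2n}(i)\le B_n(i)$.

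For the coupling, take i.i.d.\ $\mathrm{Unif}[0,1]$ variables $(U_{n,i})_{n\ge1,\,i\in\{1,2\}}$. Realize the IUM by setting, at step $n+1$, $\xi_{n+1}(i)=\mathds{1}_{\{U_{n+1,i}\le q_{n,i}\}}$ with
\[
q_{n,i}:=\EE(\xi_{n+1}(i)\mid\GG_n)=p\,\frac{W(B_n^*)}{W(B_n^*)+W(R_n^*)}+(1-p)\,\frac{W(B_n(i))}{W(B_n(i))+W(R_n(i))},
\]
which yields a version of the IUM (given $\GG_n$, the two urns being conditionally independent). Realize $(\tilde B_n,\tilde R_n)$ by adding a black ball to urn $1$ at step $2n+1$ iff $U_{n+1,1}\le\tilde q_{n,1}:=\frac{W(\tilde B_{2n}(1))}{W(\tilde B_{2n}(1))+W(\tilde R_{2n}^*)}$, and a black ball to urn $2$ at step $2n+2$ iff $U_{n+1,2}\le\tilde q_{n,2}:=\frac{W(\tilde B_{2n}(2))}{W(\tilde B_{2n}(2))+W(\tilde R_{2n+1}^*)}$; this is a legitimate version of the process described before the lemma, because $\tilde q_{n,2}$ is measurable with respect to the history up to step $2n+1$ while $U_{n+1,2}$ is independent of that history.

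The induction hypothesis at level $n$ reads $\tilde B_{2n}(i)\le B_n(i)$ (equivalently $\tilde R_{2n}(i)\ge R_n(i)$) for $i=1,2$; the base case $n=0$ is the assumption on initial data. Assuming it at level $n$, we get $\tilde R_{2n}^*\ge\tilde R_{2n}(i)\ge R_n(i)$, $\tilde R_{2n}^*=\tilde R_{2n}(1)+\tilde R_{2n}(2)\ge R_n^*$, $\tilde B_{2n}(i)\le B_n(i)\le B_n^*$, and $\tilde R_{2n+1}^*\ge\tilde R_{2n}^*$. Since $W$ is non-decreasing and positive, $a\mapsto W(a)/(W(a)+W(b))$ is non-decreasing in $a$ and non-increasing in $b$; comparing $\tilde q_{n,1}$ once to the ``local'' term and once to the ``global'' term of $q_{n,1}$ shows $\tilde q_{n,1}\le q_{n,1}$, and similarly (using $\tilde R_{2n+1}^*\ge\tilde R_{2n}^*$ first) $\tilde q_{n,2}\le\frac{W(\tilde B_{2n}(2))}{W(\tilde B_{2n}(2))+W(\tilde R_{2n}^*)}\le q_{n,2}$. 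Hence $\{U_{n+1,i}\le\tilde q_{n,i}\}\subseteq\{U_{n+1,i}\le q_{n,i}\}$, so the black increment to urn $i$ in the new process is at most $\xi_{n+1}(i)$; adding the induction hypothesis gives $\tilde B_{2n+2}(i)\le B_{n+1}(i)$, closing the induction.

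I do not anticipate a genuine obstacle — this is a routine monotone coupling — but two points need care. The first is confirming that the uniform-driven construction of $(\tilde B_n,\tilde R_n)$ has the prescribed law, in particular that basing urn $2$'s step-$(2n+2)$ draw on the \emph{post}-step-$(2n+1)$ global red count $\tilde R_{2n+1}^*$ is admissible, which holds because that count is already history-measurable. The second, and the real heart of the argument, is that the single number $\tilde q_{n,i}$ must be dominated by \emph{both} terms of the mixture defining $q_{n,i}$: one must bound the new global red count below by the IUM's global red count \emph{and} by its local red count simultaneously, and bound $\tilde B_{2n}(i)$ above by both $B_n(i)$ and $B_n^*$ — exactly what the induction hypothesis, the trivial inequalities $R_n^*\ge R_n(i)$ and $B_n(i)\le B_n^*$, and the monotonicity of $W$ together provide.
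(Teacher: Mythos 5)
Your proof is correct and takes essentially the same approach as the paper: couple via a common array of i.i.d.\ uniforms, then establish (\ref{coupleine}) by induction using the monotonicity of $W$. The only variation is that the paper also draws the $\eta$'s explicitly and compares $U_{n+1}^{(i)}$ to the single $\eta$-selected threshold (local or global), whereas you compare $U_{n+1,i}$ to the averaged threshold $q_{n,i}$; both are valid versions of the IUM, and in both cases the needed domination reduces to the same two inequalities $\tilde q_{n,i}\le\text{(local)}$ and $\tilde q_{n,i}\le\text{(global)}$, so the arguments are interchangeable.
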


    \begin{lemma}
        \label{estnewurntimelines}
    Assume that  $\{W(n)\}_{n\in \NN}$ is non-decreasing and satisfies (\ref{strongreinforce}) and (\ref{remKcond}). Then, there exists positive constants $\varepsilon_1 \in (0,1)$ and $\kappa \in \NN_+$ such that if $n\geq \kappa$ and $\tilde{R}_{2n}^{*}<\varepsilon_1n$, then 
    $$
    \PP(  \lim_{\ell \to \infty}\tilde{R}^*_{\ell}<\infty \mid \tilde{\FF}_{2n})>\frac{1}{10e},
    $$
    where $\tilde{\FF}_n:=\sigma(\tilde{B}_m(i), \tilde{R}_m(i): 1\leq i \leq 2, m\leq n)$.
    \end{lemma}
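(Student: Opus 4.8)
The plan is to fix the value $R_0:=\tilde{R}^*_{2n}$, which is $\tilde{\FF}_{2n}$-measurable, and to show that, conditionally on $\tilde{\FF}_{2n}$ on the event $\{R_0<\varepsilon_1 n\}$, the total red count $\tilde{R}^*_\ell$ never reaches the level $2R_0$ with conditional probability $>1-\varepsilon$. Since $\ell\mapsto\tilde{R}^*_\ell$ is non-decreasing and integer valued, on that event it is bounded by $2R_0$, hence converges, and this yields $\PP(\lim_{\ell\to\infty}\tilde{R}^*_\ell<\infty\mid\tilde{\FF}_{2n})>1-\varepsilon$. The case $R_0=0$ is trivial: then $W(\tilde{R}^*_{2n})=W(0)=0$, so no red ball is ever added and $\tilde{R}^*_\ell\equiv 0$; so from now on assume $R_0\ge 1$.

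First I would record a one-round estimate. Write $\rho_\ell:=\tilde{R}^*_{2\ell}$ and, for $\ell\ge n$, set $A_\ell:=(\rho_{\ell+1}-\rho_\ell)\,\mathds{1}_{\{\rho_\ell<2R_0\}}\ge 0$. Assume $\varepsilon_1\le 1/4$. On $\{\rho_\ell<2R_0\}$ we have $\tilde{B}_{2\ell}(i)\ge \ell-\tilde{R}_{2\ell}(i)\ge \ell-\rho_\ell>\ell-2R_0>\ell-2\varepsilon_1\ell\ge \ell/2$ for $i=1,2$ (using $\ell\ge n$ and $2R_0<2\varepsilon_1 n\le 2\varepsilon_1\ell$), while $\tilde{R}^*_{2\ell},\tilde{R}^*_{2\ell+1}\le \rho_\ell+1\le 2R_0$. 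Since $W(k)=k^m$ is increasing, the conditional probability of adding a red ball at step $2\ell+1$ (to urn $1$) or at step $2\ell+2$ (to urn $2$) is therefore at most $2\,W(2R_0)/W(\ell/2)=2\cdot 4^m R_0^m/\ell^m$. Hence, with $C_m:=2\cdot 4^m$, we get $\EE[A_\ell\mid\tilde{\FF}_{2\ell}]\le C_m R_0^m/\ell^m$ for every $\ell\ge n$.

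Next, on the event that $\rho_\ell\ge 2R_0$ for some $\ell$, summing the increments of $\rho$ up to the first such time shows $\sum_{\ell\ge n}A_\ell\ge 2R_0-R_0=R_0$. Consequently, by the tower property (and $\tilde{\FF}_{2n}$-measurability of $R_0$) followed by conditional Markov,
$$
\PP\!\left(\exists\,\ell:\ \rho_\ell\ge 2R_0 \,\middle|\, \tilde{\FF}_{2n}\right)\ \le\ \frac{1}{R_0}\,\EE\!\left[\sum_{\ell\ge n}A_\ell\,\middle|\,\tilde{\FF}_{2n}\right]\ \le\ \frac{C_m R_0^{m-1}}{R_0}\sum_{\ell\ge n}\frac{R_0}{\ell^m}\ \le\ 2C_m\,\frac{R_0^{m-1}}{n^{m-1}}\ <\ 2C_m\,\varepsilon_1^{m-1},
$$
where I used $\sum_{\ell\ge n}\ell^{-m}\le 2\,n^{1-m}$ for $m\ge 2$ (the only place $m\ge 2$ enters) and then the hypothesis $R_0<\varepsilon_1 n$. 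It remains to choose $\varepsilon_1\in(0,1/4]$ small enough that $2C_m\varepsilon_1^{m-1}<\varepsilon$, and $\kappa$ any sufficiently large integer (in fact the hypothesis $R_0<\varepsilon_1 n$ already supplies all the inequalities used above); then the displayed probability is $<\varepsilon$, which finishes the proof via the reduction in the first paragraph.

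I expect the crux to be the division by $R_0$ in the Markov step. A naive union bound on the event ``at least one more red ball is ever added'' costs $\sum_{\ell\ge n}C_mR_0^m/\ell^m\asymp R_0^m/n^{m-1}$, which in the worst case $R_0\asymp \varepsilon_1 n$ is itself of order $n$ and hence useless; the gain comes precisely from the observation that \emph{reaching} level $2R_0$ requires a full $R_0$ additional red balls, so that the relevant quantity is $R_0^{m-1}/n^{m-1}=(R_0/n)^{m-1}<\varepsilon_1^{m-1}$, which is small uniformly in $R_0$. Everything else is routine: the one-round probability estimate uses only $\tilde{B}_{2\ell}(i)\ge \ell/2$ and the monotonicity of $W$, and the tail sum $\sum_{\ell\ge n}\ell^{-m}=O(n^{1-m})$ is standard. (Alternatively, one could phrase the same estimate through the continuous-time timelines picture, comparing the remaining timer durations $\xi_k/W(\,\cdot\,)$ on the ``red'' edges against those on the ``black'' edges, but the discrete martingale-increment bound above is cleaner and self-contained.)
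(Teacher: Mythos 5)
Your proof is correct, and it takes a genuinely different route from the paper's. The paper proves this lemma through the continuous-time Rubin timelines construction (consistent with the rest of Section~3, where the same machinery drives the proof of Theorem~\ref{wnnewprop}): it embeds the process in continuous time, writes the total time needed to cross the red edge infinitely often as $T_n^{(r)}$ and the corresponding time for the two black edges as $T_n^{(b)}$, and shows via Chebyshev's and Markov's inequalities that $T_n^{(b)}<T_n^{(r)}$ with high conditional probability, which forces black monopoly. You instead argue entirely in discrete time: you bound the one-round conditional drift of the total red count $\rho_\ell=\tilde R^*_{2\ell}$ (on the event it has not yet doubled) by $C_m R_0^m/\ell^m$, observe that reaching level $2R_0$ forces the total accumulated increment $\sum_\ell A_\ell$ to be at least $R_0$, and apply Markov's inequality. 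The crucial gain — dividing by $R_0$, so that only the ratio $(R_0/n)^{m-1}<\varepsilon_1^{m-1}$ survives — is the analogue of the paper's comparison of medians/means of $T_n^{(r)}$ and $T_n^{(b)}$, but it is more elementary and fully self-contained; it avoids constructing the continuous-time process, the exponential alarm clocks, and the variance computation. The trade-off is that the paper's timelines framework is what makes the generalization in Remark~\ref{MDrempoly} (to non-polynomial strong reinforcement) transparent via the remainder-ratio conditions, whereas your increment-drift bound is tuned to the monotone power-law form of $W$; it would need some reworking to extend to the same generality. All the details in your argument check out, including the case $R_0=0$ (where $W(0)=0$ kills the red probability immediately) and the verification that $\kappa$ can be any positive integer once $\varepsilon_1\le 1/4$ is small enough that $2\cdot 2\cdot 4^m\,\varepsilon_1^{m-1}<\varepsilon$.
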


    Lemmas \ref{couplcomparenewurn} and \ref{estnewurntimelines} will be proved in Section \ref{seccouple}. Notice that they also hold if we interchange the colors black/red. Using Lemmas \ref{couplcomparenewurn} and \ref{estnewurntimelines}, we can prove Proposition \ref{weakstrongdomi}.

    \begin{proof}[Proof of Proposition \ref{weakstrongdomi}]
        Fix $\varepsilon\in (0,1)$, let $\varepsilon_1,\kappa$ be as in Lemma \ref{estnewurntimelines} for $\{W(n)\}_{n\geq N_1}$. We define an infinite sequence of stopping times $\{T_n\}_{n\in \NN}$ as follows. Let $T_0:=\kappa$ and for any $n \in \NN$,
        $$
     T_{n+1}:=\inf\{m>T_n: N_1\leq \min(B^{*}_{2m},R^{*}_{2m})<\varepsilon_1 m\}
        $$
        with the convention that $\inf \emptyset =\infty$. If $T_n=\infty$ for some $n$, then we set $T_{k}:=\infty$ for all $k>n$. By Lemma \ref{couplcomparenewurn} and Lemma \ref{estnewurntimelines}, for any $n\geq 1$,
       $$
    \PP_p^{W}(\mathcal{D}^c\cup\mathcal{M}\mid \GG_{T_n})\mathds{1}_{\{T_n <\infty\}} \geq \PP_p^{W}(\mathcal{M}\mid \GG_{T_n })\mathds{1}_{\{T_n<\infty\}} \geq \frac{1}{10e}\mathds{1}_{\{T_n<\infty\}}.
       $$
       On the other hand, on the event $\{T_n=\infty\}$, either 
       \begin{enumerate}
        \item $\min(B^{*}_{2m},R^{*}_{2m})<N_1$ for all $m$, in which case the monopoly occurs, or
        \item $\min(B^{*}_{2m},R^{*}_{2m})\geq \varepsilon_1 m$ for all large $m$, in which case domination does not occur.
       \end{enumerate}
         Therefore, $\PP_p^{W}(\mathcal{D}^c\cup\mathcal{M}\mid \GG_{T_n})\mathds{1}_{\{T_n=\infty\}}= \mathds{1}_{\{T_n=\infty\}}$.
       Thus, for any $n \geq 1$,
       $$
       \PP_p^{W}(\mathcal{D}^c\cup\mathcal{M}\mid \GG_{T_n})>\frac{1}{10e}.
       $$
    By Levy's 0-1 law, $\PP_p^{W}(\mathcal{D}^c\cup\mathcal{M})=1$ which completes the proof since $\mathcal{M} \subset \mathcal{D}$.
    \end{proof}

\subsection{Organization of the remaining of this paper} 

Section \ref{oderesults} concerns the results on the deterministic nonlinear system (\ref{odedxFm}): Proposition \ref{equifinite} and Proposition \ref{stableexistence} are proved.

Section \ref{stocappsec} develops the framework relating the behavior of $(x_n,y_n)$ to the planar system (\ref{odedxFm}): We prove Proposition \ref{propstocappxy}, Theorem \ref{xnynu} and Theorem \ref{pmlarger12}. 

Theorem \ref{wnnewprop} is proved in Section \ref{ctdelays}. Section \ref{seccouple} is devoted to the proofs of Lemma \ref{couplcomparenewurn} and Lemma \ref{estnewurntimelines}. Some open problems are presented in the last section. 

\section{Results on the deterministic dynamical system}
\label{oderesults}

We assume that $\alpha>1$ and $p\in [0,1]$. The following two functions will be used frequently:
 \begin{equation}
    \label{defghalpha}
        h(t):=\frac{t^{\alpha}}{t^{\alpha}+(1-t)^{\alpha}}, \quad g(t):=-t+(1-p)h(t)+\frac{p}{2}, \quad t\in [0,1].
\end{equation}
In particular, $h^{\prime}(t)=\alpha f(t)$ and $g^{\prime}(t)=-1+\alpha (1-p) f(t)$ where $f(t)$ is defined by (\ref{deffderxm}).
\begin{lemma}
    \label{lemsoluualpha}
    The equation $g(u)=0$ has a solution on $[0,1/2)$ if and only if $p <  (\alpha-1)/\alpha$. If one solution exists, then it is unique.
\end{lemma}
\begin{proof}
 The assertion is trivial for $p=1$. We now assume that $p<1$. Note that $g(0)=p/2, g(1/2)=0$. Observe that $g^{\prime}(t)$ is a strictly increasing function on $[0,1/2]$ with $g^{\prime}(0)=1$ and $g^{\prime}(1/2)=\alpha(1-p)-1$. 

 If $g^{\prime}(1/2)>0$, then there exists a unique $t_0 \in (0,1/2)$ such that $g^{\prime}(t_0)=0$. The function $g$ is strictly decreasing on $[0,t_0]$ and strictly increasing on $[t_0,1/2]$. In particular, there exists a unique solution to $g(u)=0$ on $[0,1/2)$. See Figure \ref{Fig225} for the case $\alpha=2$ and $p=0.25$.

 If $g^{\prime}(1/2)\leq 0$, then $g$ is strictly decreasing on $[0,1/2]$, and thus, there is no solution to $g(u)=0$ on $[0,1/2)$. The case $\alpha=2$ and $p=0.75$ is plotted in Figure \ref{Fig275}.
\end{proof}

\begin{figure}[t]
    \centering
    \subfigure[$\alpha=2, p=0.25$]{
    \includegraphics[width=6.6cm]{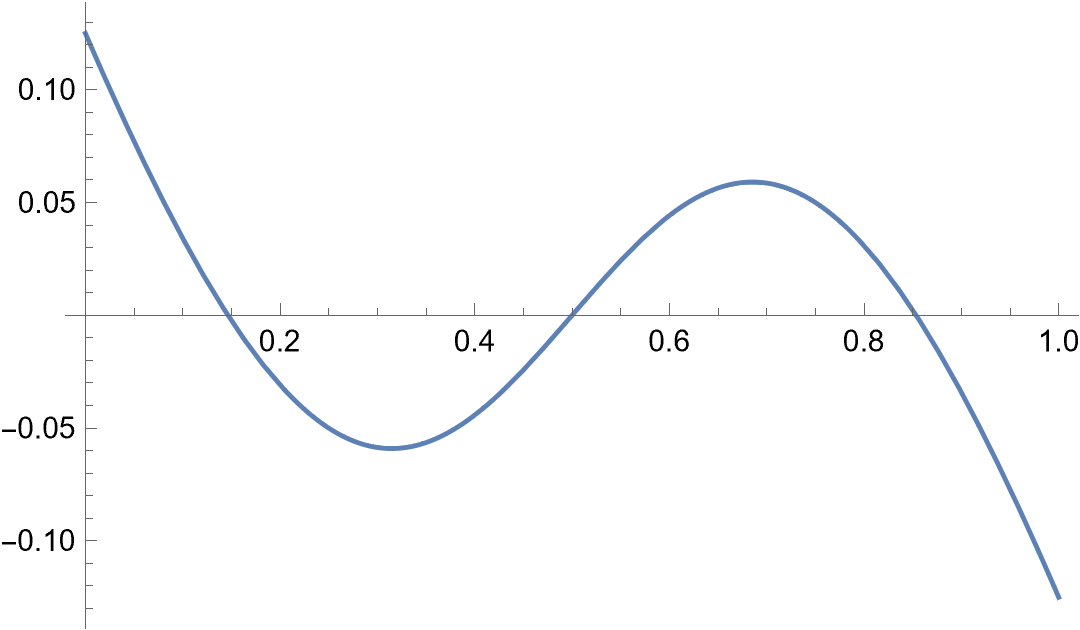}
    \label{Fig225}
    }
    \quad
    \subfigure[$\alpha=2, p=0.75$]{
    \includegraphics[width=6.6cm]{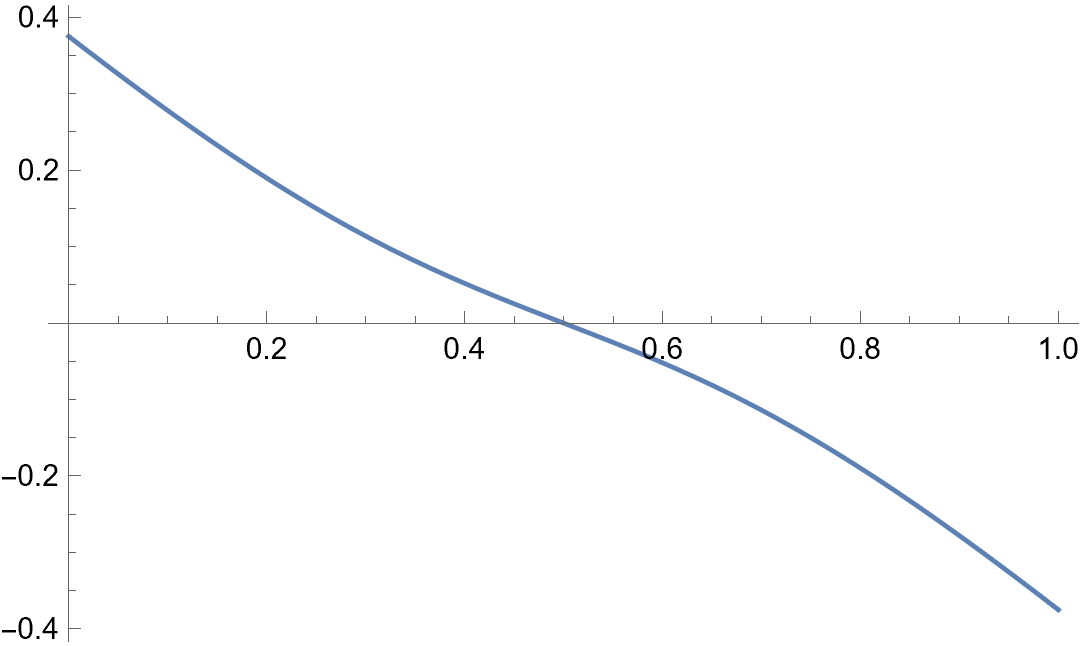}
    \label{Fig275}
    }
    \caption{$g(u)=-u+\frac{(1-p)u^{\alpha}}{u^{\alpha}+(1-u)^{\alpha}}+\frac{p}{2}$ on $[0,1]$}\label{guualphap}
\end{figure}

We denote the unique solution by $u_{\alpha, p}$ when it exists. Note that $u_{\alpha, p}=0$ only when $p=0$. 
 
The proof of Proposition \ref{equifinite} will need the following three technical lemmas.

\begin{lemma}
   \label{simplecaselem}
(i) If $\alpha>1$ and $p=0$, then $\Lambda^{(\alpha)}_{0} = \{0,1/2,1\} \times \{0,1/2,1\}$. \\
(ii) For $\alpha> 1$ and $p>0$, one has
$$\Lambda^{(\alpha)}_{p} \bigcap \left( [0,\frac{1}{2}]^2\cup [\frac{1}{2},1]^2 \cup \partial [0,1]^2\right)=\{(0,0), (\frac{1}{2},\frac{1}{2}),(1,1)\}.$$
\end{lemma}
\begin{proof}
 (i) If $z\in [1/2,1]$, then 
   \begin{equation}
       \label{zmlargez}
      \frac{z^{\alpha}}{z^{\alpha}+(1-z)^{\alpha}}= \frac{1}{1+(\frac{1}{z}-1)^{\alpha}} \geq  \frac{1}{1+(\frac{1}{z}-1)}=z ,
   \end{equation}
   where the equality holds if and only if $z=1/2$ or $z=1$. The inequality (\ref{zmlargez}) is reversed if $z\in [0,1/2]$ where the equality holds if and only if $z=0$ or $z=1/2$. This proves (i).

(ii) If $(x,y)\in [1/2,1]^2$ is an equilibrium, then by (\ref{zmlargez}), we have
$$0=-x+ \frac{(1-p)x^{\alpha}}{x^{\alpha}+(1-x)^{\alpha}} + \frac{p(\frac{x+y}{2})^{\alpha}}{(\frac{x+y}{2})^{\alpha}+(1-\frac{x+y}{2})^{\alpha}}\geq -x+(1-p)x+\frac{p(x+y)}{2}=\frac{p(y-x)}{2},$$
and similarly,
$$0=-y+ \frac{(1-p)y^{\alpha}}{y^{\alpha}+(1-y)^{\alpha}} + \frac{p(x+y)^{\alpha}}{(x+y)^{\alpha}+(2-x-y)^{\alpha}}\geq \frac{p(x-y)}{2}.$$
Thus, $(x,y)=(1/2,1/2)$ or $(1,1)$. Similarly, if $(x,y)\in [0,1/2]^2$ is an equilibrium, then $(x,y)=(0,0)$ or $(1/2,1/2)$. Moreover, it is easy to see that if a boundary point $(x,y)\in \partial [0,1]^2$ is an equilibrium, then $(x,y)=(0,0)$ or $(1,1)$. (ii) is then proved.
\end{proof}

\begin{lemma}
    \label{betahpositve}
    Define 
    \begin{equation}
        \label{equfhposi}
        \beta(t) := \frac{1}{2} \frac{(1+t)^{\alpha}-(1-t)^{\alpha}}{(1+t)^{\alpha}+(1-t)^{\alpha}}+\frac{1}{2}\frac{t^{\alpha}}{t^{\alpha}+(1-t)^{\alpha}}-t, \quad  t\in (0,\frac{1}{2}).
    \end{equation} 
If $\alpha\geq 2$, then for any $t\in (0,1/2)$, one has $\beta(t)>0$.
\end{lemma}
\begin{proof}
Observe that on the interval $(0,1/2)$, the function $h$ defined in (\ref{defghalpha}) is convex and $h(t)<t$. Thus, if $t\in (1/4,1/2)$, then
$$
\frac{h(t)+h(1-2t)}{2}-h(\frac{1-t}{2})\geq 0 >\frac{1}{2}(h(1-2t)-1+2t).
$$
In particular,
$$
\beta(t)=\frac{1}{2}h(t)-h(\frac{1-t}{2}) + \frac{1}{2}-t>0, \quad t\in (\frac{1}{4},\frac{1}{2}).
$$
Since $\alpha\geq 2$, one has
$$
(1+\frac{2t}{1-t})^{\alpha}=\left(1+\frac{4t}{1-t}+\frac{4t^2}{(1-t)^2}\right)^{\frac{\alpha}{2}}\geq  1+\frac{2\alpha t}{1-t}+\frac{2\alpha t^2}{(1-t)^2}, \quad t\in (0,\frac{1}{2}).
$$
Thus, for $\alpha\geq 9/4$ and $t\in (0,1/4]$, we have 
   $$ \frac{(1+t)^{\alpha}-(1-t)^{\alpha}}{(1+t)^{\alpha}+(1-t)^{\alpha}}=\frac{(1+\frac{2t}{1-t})^{\alpha}-1}{(1+\frac{2t}{1-t})^{\alpha}+1}\geq \frac{\alpha t}{(1-t)^2+\alpha t}\geq 2t,$$
   where in the last inequality we used that 
$$
\alpha\geq \frac{9}{4}\geq \frac{2}{1-(\frac{t}{1-t})^2}=\frac{2(1-t)^2}{1-2t}.
$$
If $\alpha\in [2,9/4)$ and $t\in (0,1/4]$, then 
$$
\begin{aligned}
    \beta(t)&\geq \frac{1}{2} \frac{(1+t)^{2}-(1-t)^{2}}{(1+t)^{2}+(1-t)^{2}}+\frac{1}{2}\frac{t^{\frac{9}{4}}}{t^{\frac{9}{4}}+(1-t)^{\frac{9}{4}}}-t \\
    &=\frac{t^{\frac{9}{4}}}{2}\left(\frac{-2t^{\frac{3}{4}}}{1+t^2}+\frac{1}{t^{\frac{9}{4}}+(1-t)^{\frac{9}{4}}} \right) \geq \frac{t^{\frac{9}{4}}}{2}\left(\frac{-\sqrt{2}}{2}+1 \right)>0,
\end{aligned}
$$
which completes the proof.
\end{proof}

\begin{lemma}
    \label{plarge12equili}
    Let $\alpha>2$ and $p\in [1/2, (\alpha-1)/\alpha)$. One has 
    $$
    \Lambda^{(\alpha)}_{p}=\{(0,0),(\frac{1}{2},\frac{1}{2}), (1,1),(u_{\alpha, p},1-u_{\alpha, p}),(1-u_{\alpha, p},u_{\alpha, p})\} ;
    $$
\end{lemma}
\begin{proof}
   By Lemma \ref{lemsoluualpha}, the set of equilibria on the line $x+y=1$ is given by
    $$
    \{(\frac{1}{2},\frac{1}{2}), (u_{\alpha, p},1-u_{\alpha, p}),(1-u_{\alpha, p},u_{\alpha, p})\} .
$$
 Thus, it remains to show that
   $$
   \Lambda^{(\alpha)}_{p}\backslash (\{(x,y):x+y=1\}\cup \{(0,0), (1,1)\}) =\emptyset.
   $$
   Observe that if $(x,y)\in \Lambda^{(\alpha)}_{p}$, then $(1-x,1-y)\in \Lambda^{(\alpha)}_{p}$. Therefore, by Lemma \ref{simplecaselem}, it suffices to prove that there is no equilibrium $(\tilde{x},\tilde{y})$ such that $0<\tilde{x}<1/2<\tilde{y}<1$ and $\tilde{z}:=(\tilde{x}+\tilde{y})/2\in (1/2,3/4)$.  We argue by contradiction and assume that $(\tilde{x},\tilde{y})$ is such an equilibrium.

   Since  $(\tilde{x},\tilde{y})\in \Lambda^{(\alpha)}_{p}$,
    \begin{equation}
        \label{zsumequ}
       -2\tilde{z} + \frac{(1-p)(2\tilde{z}-\tilde{y})^{\alpha}}{(2\tilde{z}-\tilde{y})^{\alpha}+(1+\tilde{y}-2\tilde{z})^{\alpha}}+\frac{(1-p)\tilde{y}^{\alpha}}{\tilde{y}^{\alpha}+(1-\tilde{y})^{\alpha}}+\frac{2p\tilde{z}^{\alpha}}{\tilde{z}^{\alpha}+(1-\tilde{z})^{\alpha}}=0.
    \end{equation}
    Now fix $z\in (1/2,3/4)$, the function 
   $$
 J(y):= \frac{(2z-y)^{\alpha}}{(2z-y)^{\alpha}+(1+y-2z)^{\alpha}}+\frac{y^{\alpha}}{y^{\alpha}+(1-y)^{\alpha}} , \quad y\in [2z-\frac{1}{2},1],
   $$
   has derivative $J^{\prime}(y)=-\alpha(f(2z-y)-f(y))$. Observe that $|2z-y-1/2|\leq |y-1/2|$ and thus $J^{\prime}(y)\leq 0$ (note that $f(x)$ decreases as $|x-1/2|$ increases). In particular, for any $y\in [2z-1/2,1]$, $J(y)\geq J(1)$, i.e.
   \begin{equation}
    \label{zyderremovey}
      \begin{aligned}
       &\quad -2z + \frac{(1-p)(2z-y)^{\alpha}}{(2z-y)^{\alpha}+(1+y-2z)^{\alpha}}+\frac{(1-p)y^{\alpha}}{y^{\alpha}+(1-y)^{\alpha}}+\frac{2pz^{\alpha}}{z^{\alpha}+(1-z)^{\alpha}} \\
       &\geq -2z + \frac{(1-p)(2z-1)^{\alpha}}{(2z-1)^{\alpha}+(2-2z)^{\alpha}}+(1-p)+\frac{2pz^{\alpha}}{z^{\alpha}+(1-z)^{\alpha}} \\
       &\stackrel{t=2z-1}{=}(2p-1)(\beta(t)+t-h(t))+\beta(t) > 0,
   \end{aligned}
   \end{equation}
 where $\beta(t)$ is defined in (\ref{equfhposi}) and we used Lemma \ref{betahpositve} in the last inequality. However, this contradicts (\ref{zsumequ}).
\end{proof}

Now we are ready to prove Proposition \ref{equifinite}.

\begin{proof}[Proof of Proposition \ref{equifinite}]
    It is direct to check that $\Lambda^{(\alpha)}_{p}$ includes $(0,0), (1/2,1/2),(1,1)$. The assertions (i) and (ii) follow from Lemma \ref{simplecaselem} and Lemma \ref{plarge12equili}, respectively.

By symmetry and Lemma \ref{simplecaselem}, to prove (iii) and (iv), we need to show that the set $$\Lambda^{(\alpha),+}_{p}:= \{(x,y)\in \Lambda^{(\alpha)}_{p}: 0<x<1/2<y<1, x+y> 1\}$$ 
is finite if $p>0$ and is empty if $p\geq (\alpha-1)/\alpha$.  \\
(iii) Assume that $p\geq (\alpha-1)/\alpha$ and $(\tilde{x},\tilde{y})\in \Lambda^{(\alpha),+}_{p}$, and in particular, $\tilde{z}:=(\tilde{x}+\tilde{y})/2\in (1/2,3/4)$. Recall $g$ and $h$ defined in (\ref{defghalpha}). We see from the proof of Lemma \ref{lemsoluualpha} that $g(\tilde{x})>0$ and 
\begin{equation}
 \label{xtildF1positive}
 F_{p,1}^{{\alpha}}(\tilde{x},\tilde{y})=g(\tilde{x})+p\left(\frac{\tilde{z}^{\alpha}}{\tilde{z}^{\alpha}+(1-\tilde{z})^{\alpha}}-\frac{1}{2}\right)>0,
\end{equation}
which contradicts our assumption, and proves (iii). \\
(iv) Assume that $p\in (0, (\alpha-1)/\alpha)$. Using arguments in the proof of Lemma \ref{lemsoluualpha}, we see that there exists $t_1\in (1/2,1)$ such that $g(u)$ is increasing on $[1/2,t_1]$ and decreasing on $[t_1,1]$, see e.g. Figure \ref{Fig225}. For $t,z\in [0,1]$, define 
$$\tilde{F}(t,z):=g(t)+p(h(z)-\frac{1}{2}).$$
  Since $g(t_1)>g(1/2)=0$ and $g(1)=-p/2$, there exists an $\varepsilon>0$ such that for any $z\in (1/2-\varepsilon,1]$, there exists a unique $y\in (t_1,1]$ such that $\tilde{F}(y,z)=0$. By the analytic implicit function theorem, see e.g. \cite[Theorem 6.1.2]{MR2977424}, this unique $y$ can be written as $y=q(z)$ where $q$ is analytic and increasing on $(1/2-\varepsilon,1)$ and 
\begin{equation}
    \label{yzder}
    q(\frac{1}{2})>t_1, \quad q(1)=1, \quad  q^{\prime}(z)=\frac{p\alpha f(z)}{1-\alpha(1-p)f(q(z))}=\frac{p\alpha f(z)}{-g^{\prime}(q(z))}>0.
    \end{equation}
    Observing that $f(z)$ decreases as $|z-1/2|$ increases, we see that $2-q^{\prime}(z)$ is increasing on $[1/2,1)$. Thus, using the convexity of $2z-q(z)$ and that $1-q(1/2)<1$, one has, for any $z\in [1/2,1)$, 
    \begin{equation}
        \label{bd2hzconvex}
        0<1-q(z)\leq 2z-q(z)<2-q(1)=1.
    \end{equation}
     Now we consider the analytic function $\widehat{F}(z):=\tilde{F}(2z-q(z),z)$ on $(1/2-\varepsilon,1)$. It is non-constant since its derivative equals
    \begin{equation}
        \label{g3derequ}
        \widehat{F}^{\prime}(z)= \left(\alpha(1-p)f(2z-q(z))-1\right)(2-q^{\prime}(z))-\alpha pf(z)
    \end{equation}
    which converges to $-2$ as $z\to 1$ (observe that $f(z) \to 0$ and $q^{\prime}(z) \to 0$ as $z\to 1$). As a non-constant analytic function, $\widehat{F}(z)$ has finite zeros on $[1/2,3/4]$. Observe that if $(\tilde{x},\tilde{y})\in \Lambda^{(\alpha),+}_{p}$ with $\tilde{z}:=(\tilde{x}+\tilde{y})/2> 1/2$, then $\tilde{y}=q(\tilde{z})$ and $\tilde{x}=2\tilde{z}-q(\tilde{z})$. In particular, $\tilde{z}$ is a zero of $\widehat{F}(z)$ on $[1/2,3/4]$. Since $\tilde{z}$ can take only finitely many values, (iv) is proved.
\end{proof}

For the proof of Proposition \ref{stableexistence}, we will need the following auxiliary lemmas.

\begin{lemma}
    \label{htalphat}
    Assume that $t\in (0,1/6]$. One has, \\
  (i) if $\alpha\geq 2$, then $2\alpha h(t) \leq t$; \\
  (ii) if $\alpha \in (1,2)$, then 
  $$
   h(t) \leq \frac{1}{\alpha}(1-\frac{1}{4}(\alpha-1))t.
  $$
\end{lemma}
\begin{proof}
 Fix $x=1/t\geq 6$, define 
 $$
\phi(u):= u \log (x-1) -\log (2u x-1), \quad u\in [2,\infty),
 $$
 and 
 $$
 \varphi(u):=u\log (x-1) -\log (\frac{u}{1-\frac{1}{4}(u-1)} x-1), \quad u\in [1,2].
  $$
 Then $\phi(2)>0$ and $\varphi(1)=0$. Moreover, 
$$
    \phi^{\prime}(u)\geq   \log (x-1)-\frac{2x}{4x-1} >0, \quad u\in [2,\infty),
    $$
    and for $u\in [1,2]$,
    $$
    \begin{aligned}
      \varphi^{\prime}(u) &= \log (x-1)-\frac{5}{5-u}\frac{x}{ux-1+\frac{1}{4}(u-1)}\geq \log (x-1)-\frac{5}{5-u}\frac{x}{ux-1} \\
      &\geq \log (x-1)-\frac{5}{4}\frac{x}{x-1} \geq \log 5 -\frac{3}{2}>0,
    \end{aligned}
    $$
    where we used that $(5-u)(ux-1)$ achieves its minimum on $[1,2]$ at $u=1$. Therefore, $\phi(u)>0$ on $[2,\infty)$ and $\varphi(u)>0$ on $(1,2]$. It remains to notice that (i) and (ii) follow from that $\phi(\alpha)>0$ and that $\varphi(\alpha)>0$, respectively.
\end{proof}

\begin{lemma}
    \label{ualphavalpha}
    For $\alpha>1$ and $p\in (0,\alpha^{-1}\min((\alpha-1)/6,1/3)]$, let
    \begin{equation}
        \label{valphap}
        v_{\alpha, p} := \frac{1}{2}-\frac{1}{2}\sqrt{\frac{(\alpha-1)(1-p)}{\alpha-1+p+\alpha p-\alpha p^2}}=\frac{1}{2}-\frac{1}{2}\sqrt{1-\frac{\alpha p(2-p)}{\alpha-1+p+\alpha p-\alpha p^2}}.
    \end{equation}
    Then $g(v_{\alpha, p})<0$, and in particular, $u_{\alpha, p}<v_{\alpha, p}$.
\end{lemma}
\begin{proof}
For $x\in (0,1)$, one has $1-x<\sqrt{1-x}<1-x/2$, and in particular,
\begin{equation}
    \label{valphapbd}
    \frac{\alpha p(2-p)}{4(\alpha-1+p+\alpha p-\alpha p^2)} <v_{\alpha, p} <\frac{\alpha p(2-p)}{2(\alpha-1+p+\alpha p-\alpha p^2)}<\frac{1}{6},
\end{equation}
where the last inequality follows from that $p\leq  \alpha^{-1}(\alpha-1)/6$. Observe that 
    $$
g(t)=-(1-p)(t-h(t))+p(\frac{1}{2}-t).
    $$
 Then $g(v_{\alpha, p})<0$ if and only if
    \begin{equation}
        \label{alphalarge2vh}
        \frac{1}{p} (v_{\alpha, p}-h(v_{\alpha, p})) > \frac{1}{1-p}(\frac{1}{2}-v_{\alpha, p})=\frac{1}{2(1-p)}\sqrt{\frac{(\alpha-1)(1-p)}{\alpha-1+p+\alpha p-\alpha p^2}}.
    \end{equation}
(i) If $\alpha\geq 2$, by Lemma \ref{htalphat} (i) and (\ref{valphapbd}), 
$$
\frac{1}{p} (v_{\alpha, p}-h(v_{\alpha, p}))> (1-\frac{1}{2\alpha})  \frac{\alpha (2-p)}{4(\alpha-1+p+\alpha p-\alpha p^2)}> \frac{(\alpha-\frac{1}{2}) \sqrt{1-p}}{2(\alpha-1+p+\alpha p-\alpha p^2)},
$$
where we used that $1-p/2> \sqrt{1-p}$. To prove (\ref{alphalarge2vh}), it suffices to show that 
    $$
   (\alpha-\frac{1}{2}) (1-p) - (\alpha-1)\sqrt{1+\frac{p(\alpha+1)}{\alpha-1}}\geq 0.
    $$
 Using that $\sqrt{1+x}\leq 1+x/2$ for $x\geq 0$, we see the left-hand side is lower bounded by
 $$(\alpha-\frac{1}{2}) (1-p) - (\alpha-1+\frac{p(\alpha+1)}{2})=\frac{1-3\alpha p}{2}\geq 0,$$
 which completes the proof for the case $\alpha\geq 2$. \\
(ii) If $\alpha \in (1,2)$, similarly, by Lemma \ref{htalphat} (ii) and (\ref{valphapbd}), it suffices to show that
$$
\frac{5(\alpha-1)(1-p)}{4} - (\alpha-1+\frac{p(\alpha+1)}{2}) \geq 0.
$$
Using that $p\leq \alpha^{-1}(\alpha-1)/6$, we see that the left-hand side (LHS) satisfies
$$
\begin{aligned}
   \text{LHS}= \frac{\alpha-1}{4} - (\frac{5(\alpha-1)}{4}+\frac{\alpha+1}{2})p &\geq \frac{(\alpha-1)}{6\alpha} (\frac{3\alpha}{2} - \frac{5(\alpha-1)}{4}-\frac{\alpha+1}{2}) \\
   &=\frac{(\alpha-1)}{24\alpha} (3-\alpha) >0.
\end{aligned}
$$

Thus, in either case, (\ref{alphalarge2vh}) holds, and equivalently, $g(v_{\alpha, p})<0$. Since $p\leq \alpha^{-1}(\alpha-1)/6$, from the proof of Lemma \ref{lemsoluualpha}, we see that $u_{\alpha, p}$ exists and $u_{\alpha, p}<v_{\alpha, p}<1/2$.
\end{proof}

We now prove Proposition \ref{stableexistence}.

\begin{proof}[Proof of Proposition \ref{stableexistence}]
   (i) Notice that $f(1/2)=1$ and $f(u_{\alpha, p})=f(1-u_{\alpha, p})$. By (\ref{lambdapm}), we need to prove that 
\begin{equation}
    \label{lambdaumnega}
    -1+\alpha p+\alpha(1-p)f(u_{\alpha, p})<0.
\end{equation}
The case $p=0$ is trivial since $u_{\alpha, 0}=0$. For $p>0$, one has $u_{\alpha, p}>0$. Moreover, using that $g(u_{\alpha, p})=0$ (and thus $g(1-u_{\alpha, p})=0$), we have
$$
\frac{u_{\alpha, p}^{\alpha-1}}{u_{\alpha, p}^{\alpha}+(1-u_{\alpha, p})^{\alpha}}=\frac{u_{\alpha, p}-\frac{p}{2}}{(1-p)u_{\alpha, p}}, \quad \frac{(1-u_{\alpha, p})^{\alpha-1}}{u_{\alpha, p}^{\alpha}+(1-u_{\alpha, p})^{\alpha}}=\frac{1-u_{\alpha, p}-\frac{p}{2}}{(1-p)(1-u_{\alpha, p})}.
$$
Therefore, using (\ref{deffderxm}), we can write
$$
f(u_{\alpha, p})=\frac{(u_{\alpha, p}-\frac{p}{2})(1-u_{\alpha, p}-\frac{p}{2})}{(1-p)^2u_{\alpha, p}(1-u_{\alpha, p})}.
$$
Then (\ref{lambdaumnega}) is equivalent to 
$$
u_{\alpha, p} < \frac{1}{2}-\frac{1}{2}\sqrt{\frac{(\alpha-1)(1-p)}{\alpha-1+p+\alpha p-\alpha p^2}},
$$
which follows from Lemma \ref{ualphavalpha}. \\
(ii) Recall $F^{(\alpha)}_p$ defined in (\ref{Falphapdef}). If $\alpha\geq 3$ and $p=1/2-\alpha^{-1}\log \alpha$, then for any $y\in [0,1]$, one has  
$$
\begin{aligned}
    F_{p,1}^{(\alpha)}(\frac{1}{2}-\frac{\log \alpha}{2\alpha},y) &< -\frac{1}{2}+\frac{\log \alpha}{2\alpha}+(\frac{1}{2}+\frac{\log \alpha}{\alpha} )(\frac{1-\alpha^{-1}\log \alpha}{1+\alpha^{-1}\log \alpha})^{\alpha}+\frac{1}{2}-\frac{\log \alpha}{\alpha} \\
    &\leq - \frac{\log \alpha}{2\alpha}+(\frac{1}{2}+\frac{\log 3}{3} )(1-\frac{6\log \alpha}{(3+\log 3) \alpha})^{\alpha} \\
    &\leq -\frac{1}{2}\alpha^{-\frac{6}{3+\log 3}} \left(\alpha^{\frac{3-\log 3}{3+\log 3}} \log \alpha - 1-\frac{2\log 3}{3}\right) <0,
\end{aligned}
$$
where we used that $1-t\leq e^{-t}$ for $t\in [0,1]$ and that 
$$\frac{\log \alpha}{\alpha} \leq \frac{\log 3}{3}, \quad \alpha^{\frac{3-\log 3}{3+\log 3}} \log \alpha - 1-\frac{2\log 3}{3}\geq 3^{\frac{3-\log 3}{3+\log 3}} \log 3 - 1-\frac{2\log 3}{3}>0.$$
If $y\in [1/2,1]$, then for any $p\leq 1/2-\alpha^{-1}\log \alpha$, one still has
$$
F_{p,1}^{(\alpha)}(\frac{1}{2}-\frac{\log \alpha}{2\alpha},y)<0,
$$
since the left-hand side is an increasing function in $p$. Moreover, for any $x\in [0,1/2]$ and $p\leq 1/2-\alpha^{-1}\log \alpha$, one has
$$
F_{p,2}^{(\alpha)}(x,\frac{1}{2}+\frac{\log \alpha}{2\alpha})=-F_{p,1}^{(\alpha)}(\frac{1}{2}-\frac{\log \alpha}{2\alpha},1-x)>0.
$$
On the other hand, since $p>0$, one has $F_{p,1}^{(\alpha)}(0,y)>0$ and $F_{p,2}^{(\alpha)}(x,1)<0$ for $x\in [0,1/2]$ and $y\in [1/2,1]$. Therefore, the maximum of $L^{(\alpha)}_{p}$ on the square $[0,1/2-(2\alpha)^{-1}\log \alpha]\times [1/2+(2\alpha)^{-1}\log \alpha,1]$ can only be achieved at some interior point, which is asymptotically stable.

(iii) We can assume that $\alpha\geq 2$ and thus $p<(\alpha-1)/\alpha$. We shall use the functions 
$$\tilde{F}(t,z)=g(t)+p(h(z)-\frac{1}{2})$$
and $q(z)$ defined in the proof of Proposition \ref{equifinite} (iv): For any $z\in [1/2,1]$, we have $\tilde{F}(q(z),z)=0$ and $q(z)\in (t_1,1]$ where $t_1\in (1/2,1)$ is such that $g$ is strictly increasing on $[1/2, t_1]$ and strictly decreasing on $[t_1,1]$. By (\ref{zmlargez}), if $z \in (1/2,1)$, then  
$$
g(q(z))=-p(h(z)-\frac{1}{2}) < g(z),
$$
whence we have $q(z)>z$ for $z\in (1/2,1)$. Fix $\delta>0$ such that  $2\delta<\min(1/2-p,p)$, then for any $z\in [1/2+\delta, 1]$, 
$$
q(z) =(1-p)h(q(z))+ ph(z)> h(z) \geq h(1/2+\delta)=\frac{(1/2+\delta)^{\alpha}}{(1/2+\delta)^{\alpha}+(1/2-\delta)^{\alpha}} \to 1,
$$
as $\alpha \to \infty$. In particular, we can choose a large $\alpha$ such that $q(z)>1-\delta$ for all $z\geq 1/2+\delta$. Therefore, for any $z\in [1/2+\delta, 3/4-\delta]$, 
\begin{equation}
  \label{xaway122zhz}
  0< 2z-q(z) < 2z-1+\delta \leq \frac{1}{2}-\delta.
\end{equation}
Note that 
\begin{equation}
    \label{mfto0uni}
    \alpha f(t)=\frac{\alpha t^{\alpha-1}(1-t)^{\alpha-1}}{(t^{\alpha}+(1-t)^{\alpha})^2}  \to 0, \quad \text{as}\ \alpha\to \infty,
  \end{equation}
  uniformly on $[0,1/2- \delta]\cup [1/2+ \delta,1]$. Recall that $\widehat{F}(z)=\tilde{F}(2z-q(z),z)$. Using (\ref{yzder}), (\ref{g3derequ}) and (\ref{xaway122zhz}), we have
\begin{equation}
  \label{Ftildederlim}
  \lim_{\alpha \to \infty} \widehat{F}^{\prime}(z) =-2, \quad \text{uniformly on}\  [\frac{1}{2}+\delta, \frac{3}{4}-\delta].
\end{equation}
Moreover, by the choice of $\delta$, one has
\begin{equation}
    \label{Fhatendvalues}
    \lim_{\alpha\to \infty}\widehat{F}(\frac{1}{2}+\delta)=p-2\delta> 0, \quad \lim_{\alpha\to \infty}\widehat{F}(\frac{3}{4}-\delta)=p-\frac{1}{2}+2\delta <0.
\end{equation}
Therefore, for all large $\alpha$, we have $\widehat{F}(1/2+\delta)>0$ and $\widehat{F}(3/4-\delta)<0$, and by (\ref{Ftildederlim}), there exists a unique $z_{\alpha} \in (1/2+\delta, 3/4-\delta)$ such that $\widehat{F}(z_{\alpha})=0$. In particular, $s_{\alpha}:=(2z_{\alpha}-q(z_{\alpha}),q(z_{\alpha}))$ is an equilibrium. Observe that by (\ref{Ftildederlim}) and (\ref{Fhatendvalues}),
$$
\lim_{\alpha\to \infty} z_{\alpha}= \frac{3}{4}-\delta- \frac{1}{2}(\frac{1}{2}-2\delta-p)=\frac{1+p}{2},
$$
and thus $\lim_{\alpha\to \infty} s_{\alpha} =(p,1)$. Since none of $2z_{\alpha}-q(z_{\alpha}),z_{\alpha}$ and $q(z_{\alpha})$ are in the interval $[1/2-\delta,1/2+\delta]$, by (\ref{mfto0uni}), we have
$$\lim_{\alpha\to \infty}\left(\alpha f(2z_{\alpha}-q(z_{\alpha})), \alpha f(z_{\alpha}), \alpha f(q(z_{\alpha})) \right)=(0,0,0),
$$ 
which, by (\ref{lambdapm}), implies that $\lim_{\alpha\to \infty}\lambda_+(s_{\alpha}) =-1$.
Thus, $\lambda_+(s_{\alpha})<0$ for all large $\alpha$.
\end{proof}

\section{Stochastic approximation algorithm} 
\label{stocappsec}

\begin{proof}[Proof of Proposition \ref{propstocappxy}]
We assume that $W(n)$ is a polynomial of degree $\alpha$ as in (\ref{defWpolyalpha}). The case of power function reinforcement can be proved similarly. Note that 
$$
\frac{W(B_n(1))}{n^{\alpha}}=x_n+O(\frac{1}{n}), \quad \frac{W(B_n^*)}{n^{\alpha}}=x_n+y_n+O(\frac{1}{n}),
$$
thus, by (\ref{defxi}), for any $n\in \NN$, 
    \begin{equation}
        \label{xinGncond}
          \begin{aligned}
            \EE(\xi_{n+1}(1)\mid \GG_n) &= \frac{(1-p)W(B_n(1))}{W(B_n(1))+W(R_n(1))}+\frac{p W(B_n^*)}{W(B_n^*)+W(R_n^*)} \\
     &=\frac{(1-p)x_n^{\alpha}+O(\frac{1}{n})}{x_n^{\alpha}+(1-x_n)^{\alpha}+O(\frac{1}{n})}+\frac{p(x_n+y_n)^{\alpha}+O(\frac{1}{n})}{(x_n+y_n)^{\alpha}+(2-x-y)^{\alpha}+O(\frac{1}{n})}\\
      &=\frac{(1-p)x_n^{\alpha}}{x_n^{\alpha}+(1-x_n)^{\alpha}}+\frac{p(x_n+y_n)^{\alpha}}{(x_n+y_n)^{\alpha}+(2-x_n-y_n)^{\alpha}}+O(\frac{1}{n}).
  \end{aligned}
    \end{equation}
  By (\ref{defBR}), we have 
$$
    x_{n+1}-x_n=\frac{B_{n+1}(1)-B_n(1)-x_n}{n+1+B_0(1)+R_0(1)} =\frac{1}{n+1}(F^{(\alpha)}_{p,1}(x_n,y_n)+\varepsilon_{n+1}(1)+r_{n+1}(1)),
$$
where
\begin{equation}
    \label{defvarep1nMart}
 \varepsilon_{n+1}(1):=\frac{n+1}{n+1+B_0(1)+R_0(1)}\left(\xi_{n+1}(1)-\EE(\xi_{n+1}(1)\mid \GG_n)\right) ,
\end{equation}
and, by (\ref{xinGncond}), 
$$
r_{n+1}(1):=(n+1)\frac{-x_n+\EE(\xi_{n+1}(1)\mid \GG_n)}{n+1+B_0(1)+R_0(1)}-F^{(\alpha)}_{p,1}(x_n,y_n)=O(\frac{1}{n}).
$$
The equation for $y_{n+1}-y_{n}$ is proved similarly. 
\end{proof}

\begin{lemma}
    \label{lemzn}
    Recall $(z_n)_{n\in \NN}$ defined by (\ref{zndefpropB}). Under $\PP_p^{(1)}$, one has: \\
    (i) The process $(z_n)_{n\in \NN}$ satisfies the following recursion:
    $$
    z_{n+1}-z_n =\frac{1}{2n+2+B_0^*+R_0^*}\left(\tilde{\varepsilon}_{n+1}+\tilde{r}_{n+1}\right), \quad n\in \NN,
     $$
     where $(\tilde{\varepsilon}_{n+1})_{n\in \NN}$ and $(\tilde{r}_{n+1})_{n\in \NN}$ are adapted sequence such that for all $n\in \NN$, 
     \begin{equation}
        \label{tildevarrbd}
        \EE (\tilde{\varepsilon}_{n+1}\mid \GG_n)=0, \ \EE (\tilde{\varepsilon}_{n+1}^2\mid \GG_n) \leq (2+\frac{C}{2n+B_0^*+R_0^*})z_n, \ |r_{n+1}| \leq \frac{Cz_n}{2n+B_0^*+R_0^*},
     \end{equation}
      where $C$ is a positive constant. In particular, $(z_n)_{n\in \NN}$ converges a.s.\\
    (ii) For $k\geq 1$, let $\tau(k):=\inf\{n\geq k: z_n \geq 2z_k\}$ with the convention that $\inf \emptyset=\infty$. Then, there exists a positive integer $K$ such that for all $k\geq K$,
       $$
      \PP(\{\tau(k) <\infty\}\bigcup\{\lim_{n\to \infty}z_n=0, \tau(k)=\infty\}\mid \GG_k) \leq \frac{4}{B_k^*}.
       $$
     \end{lemma}
     \begin{proof}
        (i) For $n\in \NN$, let 
$$\tilde{\varepsilon}_{n+1}:= \xi_{n+1}(1)+\xi_{n+1}(2)-\EE(\xi_{n+1}(1)+\xi_{n+1}(2)\mid \GG_n)$$
and 
$$
r_{n+1}:=\EE(\xi_{n+1}(1)+\xi_{n+1}(2)\mid \GG_n)-2z_n=(1-p)(x_n+y_n-2z_n).
$$
Then
$$
    z_{n+1}-z_n=\frac{\xi_{n+1}(1)+\xi_{n+1}(2)-2z_n}{2n+2+B_0^*+R_0^*} =\frac{\tilde{\varepsilon}_{n+1}+\tilde{r}_{n+1}}{2n+2+B_0^*+R_0^*}.
$$
By definition, 
  \begin{equation}
    \label{xnyn2zn}
    x_n+y_n-2z_n= \frac{B_0(2)+R_0(2)-B_0(1)-R_0(1)}{2n+B_0^*+R_0^*}(x_n-y_n).
  \end{equation}
Using that $x_n\leq (2n+B_0^*+R_0^*)z_n/(n+B_0(1)+R_0(1))$ and $y_n\leq (2n+B_0^*+R_0^*)z_n/(n+B_0(2)+R_0(2))$, we have 
  \begin{equation}
    \label{bdrnequ}
    |r_{n+1}|=(1-p)|x_n+y_n-2z_n| \leq \frac{Cz_n}{2n+B_0^*+R_0^*}.
  \end{equation}
Note that conditional on $\GG_n$, the two random variables $\xi_{n+1}(1)$ and $\xi_{n+1}(2)$ are independent Bernoulli random variables. Thus, $\EE (\tilde{\varepsilon}_{n+1}\mid \GG_n)=0$ and 
    $$
        \EE (\tilde{\varepsilon}_{n+1}^2\mid \GG_n) \leq \EE(\xi_{n+1}(1)+\xi_{n+1}(2)\mid \GG_n) =2z_n +r_{n+1},
    $$
    which implies (\ref{tildevarrbd}) in virtue of (\ref{bdrnequ}). Now let $M_0:=0$ and
$$
M_{n}:=\sum_{j=1}^{n} \frac{\tilde{\varepsilon}_{j}}{2j+B_0^*+R_0^*},\quad n \geq 1.
$$
 Then, by (\ref{tildevarrbd}), the process $(M_n)_{n\in \NN}$ is a $L^2$-bounded martingale, and thus converges a.s. Moreover, 
 $$
 \sum_{n=1}^{\infty}\frac{|\tilde{r}_{n}|}{2n+B_0^*+R_0^*} <\infty.
 $$
These show that $(z_n)_{n\in \NN}$ converges a.s.. 
       
       (ii) For any $k\geq 1$, by (\ref{tildevarrbd}),
       $$
       \sum_{j=k+1}^{\tau(k)} \frac{|\tilde{r}_{j}|}{2j+B_0^*+R_0^*} \leq  \sum_{j=k+1}^{\tau(k)} \frac{2Cz_k}{(2j+B_0^*+R_0^*)(2j+B_0^*+R_0^*-2)} \leq \frac{Cz_k}{2k+B_0^*+R_0^*},
       $$
     and, similarly, the quadratic variation of $(M_n)_{n\in \NN}$ satisfies
     $$
     \langle M \rangle_{\tau(k)}-\langle M \rangle_{k} \leq \sum_{j=k+1}^{\tau(k)} \frac{\EE (\tilde{\varepsilon}_{j}^2\mid \GG_{j-1})}{(2j+B_0^*+R_0^*)^2} \leq  \left(2+\frac{C}{2k+B_0^*+R_0^*}\right)\frac{z_k}{2k+B_0^*+R_0^*}.
     $$
     Choose $K \in \NN$ such that $C/(2K+B_0^*+R_0^*)<1/4$. Then, for all $k\geq K$, by the optional stopping theorem, one has
       $$\frac{9z_k^2}{16}\PP(E_k \mid \GG_k) \leq \EE \left((M_{\tau(k)}-M_k)^2\mathds{1}_{E_k}\mid \GG_k\right) \leq \EE  (\langle M \rangle_{\tau(k)} -\langle M \rangle_{k}\mid \GG_k) \leq \frac{9}{4}\frac{z_k}{2k+B_0^*+R_0^*},$$
    where $E_k:=\{\tau(k) <\infty\}\cup\{\lim_{n\to \infty}z_n=0, \tau(k)=\infty\}$
 and we used that on the event $E_k$, 
       $$
|M_{\tau(k)}-M_k| \geq |z_{\tau(k)}-z_k|- \sum_{j=k+1}^{\tau(k)} \frac{|\tilde{r}_{j}|}{2j+B_0^*+R_0^*} \geq z_k-\frac{z_k}{4}=\frac{3z_k}{4}.
       $$
This proves (ii) since $z_k(2k+B_0^*+R_0^*)=B_k^*$.  
     \end{proof}

We now set $t_0:=0$, $t_n:=\sum_{i=1}^n1/i$ and define an interpolated process $(I(t))_{t\geq 0}$ by 
$$
I(t_n+s)=(x_n,y_n)+s\frac{(x_{n+1},y_{n+1})-(x_n,y_n)}{t_{n+1}-t_n},\quad n\in \NN, s\in [0,\frac{1}{n+1}].
$$
By Proposition \ref{propstocappxy} and \cite[Proposition 4.2, Remark 4.5]{MR1767993}, the interpolated process $I$ is an asymptotic pseudotrajectory of the flow induced by the vector field $F^{(\alpha)}_p$. We now prove Theorem \ref{xnynu}.

  \begin{proof}[Proof of Theorem \ref{xnynu}]
We first prove the a.s.-convergence of $((x_n,y_n))_{n\in \NN}$. We first assume that $\alpha=1$. The case $p=0$ follows from the classical results for the P\'olya urn model. If $p>0$, recall that $\Lambda^{(1)}_p=\{(x,x): x\in [0,1]\}$ and $L^{(1)}_p=-p(x-y)^2/4$ by Example \ref{examLalpha}. Since $[0,1]^2$ is compact, by \cite[Theorem 5.7]{MR1767993}, the limit set of the interpolated process $I$
$$
L(I):=\bigcap_{t \geq 0} \overline{I([t, \infty))}
$$
is internally chain transitive a.s.. Then, by Proposition \ref{stogradsys} and \cite[Proposition 6.4]{MR1767993}, almost surely, $L(I)\subset \Lambda^{(1)}_p$, and thus, the sequence $(x_n - y_n)_{n\in \NN}$ converges to $0$ a.s.. On the other hand, by Lemma \ref{lemzn} (i) and (\ref{xnyn2zn}), $(x_n + y_n)_{n\in \NN}$ converges a.s., and in particular, $((x_n,y_n))_{n\in \NN}$ converges a.s.. For $\alpha>1$, the proof is similar: By Proposition \ref{equifinite}, there are only finitely many equilibria of the gradient system (\ref{odedxFm}). Then one can directly apply \cite[Corollary 6.6]{MR1767993} to conclude that $((x_n,y_n))_{n\in \NN}$ converges a.s. to an equilibrium.

(i) We have shown that for $p>0$, almost surely, $(x_n)_{n\in \NN}$ and $(y_n)_{n\in \NN}$ have the same limit. Let $K$ be as in Lemma \ref{lemzn} (ii), and let $\theta_K:=\inf\{k\geq K: B_k^*>8 \}$. Then $\theta_K$ is a.s. finite. By Lemma \ref{lemzn} (ii), for any $j\in \NN$,
$$
\PP(\lim_{n\to \infty}z_n=0\mid \GG_{\theta_K+j})\leq \frac{4}{B^*_{\theta_K+j}} \leq \frac{1}{2},
$$
which implies that $\PP(\lim_{n\to \infty}z_n=0)=0$ by Levy's 0-1 law. Similarly, one can show that $\PP(\lim_{n\to \infty}z_n=1)=0$. Thus, $\PP_p^{(1)}(\mathcal{D})=0$.

(ii) We assume that $p\leq \alpha^{-1}\min((\alpha-1)/6,1/3)$. The existence of $u_{\alpha, p}$ follows from Lemma \ref{lemsoluualpha}. By Proposition \ref{stableexistence}, the equilibrium $(u_{\alpha, p},1-u_{\alpha, p})$ is asymptotically stable. Moreover, for any open neighborhood $\mathcal{N}$ of $(u_{\alpha, p},1-u_{\alpha, p})$ and any $m\in \NN$, one has $\PP((x_n,y_n)\in \mathcal{N}\ \text{for some } n\geq m)>0$. Then one can deduce (ii) from \cite[Theorem 7.3]{MR1767993}.       
       \end{proof}

 The following auxiliary lemma will be used in the proof of Theorem \ref{pmlarger12}.  

\begin{lemma}
    \label{unconunstable}
    If $(x,y) \in \Lambda^{(\alpha)}_{p}$ is unstable, i.e. $\lambda_+(x,y)>0$, then
        \begin{equation}
         \label{unsprob0equ}
     \PP_p^{(\alpha)}(\lim_{n\to \infty}(x_n,y_n)= (x,y))=0.
        \end{equation}
\end{lemma}
\begin{proof}
    Since $(x,y)$ is unstable, by Proposition \ref{equifinite} (iv), it is not on the boundary of $[0,1]^2$, and thus, there exists a neighborhood $\mathcal{N}$ of $(x,y)$ such that any $(u,v)\in \mathcal{N}$ is bounded away from the boundary. Now we show that there exists a constant $b>0$ such that for any $\theta \in [0,2\pi]$,
       \begin{equation}
        \label{nonconpemantle}
        \EE (\left(\varepsilon_{n+1}(1)  \cos\theta +\varepsilon_{n+1}(2)  \sin\theta \right)^+ \mid\mathcal{G}_n)\mathds{1}_{\{(x_n,y_n)\in \mathcal{N}\}}\geq b \mathds{1}_{\{(x_n,y_n)\in \mathcal{N}\}},
       \end{equation}
       where $(\varepsilon_{n+1})_{n\in \NN}$ is given in Proposition \ref{propstocappxy} and $x^+:=\max(x,0)$. By (\ref{defxi}) and (\ref{defvarep1nMart}), we can find positive constants $C_1,C_2$ such that 
       $$\PP(\varepsilon_{n+1}(1) \geq C_2, \varepsilon_{n+1}(2) \geq C_2\mid \GG_n)\mathds{1}_{\{(x_n,y_n) \in \mathcal{N}\}}\geq C_1 \mathds{1}_{\{(x_n,y_n)\in \mathcal{N}\}}.$$
     Therefore, for any $\theta \in [0,\pi/2]$, the left-hand side of (\ref{nonconpemantle}) is lower bounded by 
       $$
   C_1 C_2(\cos \theta + \sin\theta)\mathds{1}_{\{(x_n,y_n)\in \mathcal{N}\}} 
        \geq  C_1 C_2\mathds{1}_{\{(x_n,y_n)\in \mathcal{N}\}}.
       $$
    The cases $\theta \in [\pi/2,\pi], [\pi,3\pi/2], [3\pi/2,2\pi]$ can be proved similarly. 
    
    Now observe that $F^{(\alpha)}_{p}$ is $C^{\infty}$. Then, (\ref{unsprob0equ}) follows from \cite[Theorem 2.2.4]{raimond2023non}, see also \cite[Theorem 1]{MR1055428}.
    \end{proof}

 \begin{proof}[Proof of Theorem \ref{pmlarger12}]
        (i) If $p\geq \min(1/2,\alpha^{-1}(\alpha-1))$, then, by Proposition \ref{equifinite} and Corollary \ref{pgeq12unstalbe}, $\Lambda^{(\alpha)}_{p}\backslash\{(0,0),(1,1)\}$ consists of finitely many unstable equilibria. From the proof of Theorem \ref{xnynu}, we see that $(x_n,y_n)$ converges a.s. to an equilibrium. Lemma \ref{unconunstable} then implies that $\PP_p^{(\alpha)}(\mathcal{D})=1$, and thus, $p_{\alpha}\leq \min(1/2,\alpha^{-1}(\alpha-1))$. 
        
        We assume that $p_{\alpha}=\min(1/2,\alpha^{-1}(\alpha-1))$ for some $\alpha>1$. In particular, for any $q<\min(1/2,\alpha^{-1}(\alpha-1))$, there exists $p\in (q,\min(1/2,\alpha^{-1}(\alpha-1)))$ such that $\PP_p^{(\alpha)}(\mathcal{D})<1$. By Proposition \ref{equifinite} and Lemma \ref{unconunstable}, we can find two sequences $(p^{(n)})_{n\geq 1}$ and $(\tilde{s}_n)_{n\geq 1}$ such that for any $n\geq 1$, 
$$0<p^{(n)}<\min(\frac{1}{2},\frac{\alpha-1}{\alpha}),\quad \tilde{s}_n\in \Lambda^{(\alpha)}_{p} \bigcap  \left([0,\frac{1}{2}] \times [\frac{1}{2},1]\right)\ \text{with} \ \lambda_+(\tilde{s}_n)\leq 0,$$ 
and
 $$\lim_{n\to \infty}p^{(n)} = \min(\frac{1}{2},\frac{\alpha-1}{\alpha}),\quad \PP_{p^{(n)}}^{(\alpha)}(\lim_{n\to \infty}(x_n,y_n)=\tilde{s}_n)>0.$$
 Since $[0,1]^2$ is compact, by possibly choosing a subsequence, we may assume that 
 $
 \lim_{n\to \infty} \tilde{s}_n=\tilde{s}
 $
for some $\tilde{s} \in [0,1/2] \times [1/2,1]$. Since $F^{(\alpha)}_{p}(x,y)$ and $\lambda_+(x,y)$ are continuous function in $(p,x,y)$, we see that $F^{(\alpha)}_{p}(\tilde{s})=0$ and $\lambda_+(\tilde{s})\leq 0$ with $p=\min(1/2,\alpha^{-1}(\alpha-1))$, which contradicts Corollary \ref{pgeq12unstalbe}.

As shown in the proof of Theorem \ref{xnynu} (ii), if $\alpha>1$, then $\PP_p^{(\alpha)}(\lim_{n\to \infty}(x_n,y_n)= (x,y))>0$ for any $(x,y) \in \mathcal{E}_{p}^{(\alpha)}$. Thus, (ii) and (iii) are corollaries of Proposition \ref{stableexistence} (ii) and (iii) (note that for $p=0$, by (\ref{lambdapm}) and Proposition \ref{equifinite}, one has $(0,1) \in \mathcal{E}_{p}^{(\alpha)}$).
 \end{proof}

\section{Continuous-time construction with time-delays}
\label{ctdelays}

Let $X$ be the jump process defined in Section \ref{constcondelay} and $(\FF_t)_{t\geq 0}$ be its natural filtration. By Proposition \ref{ZkdNin}, we may define $X$ and the IUM $(N_n)_{n\in \NN}$ on the same probability space $\PS$ such that (\ref{ZkdNki}) holds.

The following lemma will be used in the proof of Theorem \ref{wnnewprop}. Recall that $A_n=\sum_{i=n}^{\infty}1/W(i)^2$.

\begin{lemma}
    \label{martdeviation}
    Assume that  $\{W(n)\}_{n\in \NN}$ satisfies (\ref{strongreinforce}) and $\lim_{n\to \infty} W(n) \sqrt{A_n} =\infty$. Let $\{\theta^{(1)}_n\}_{n\geq 1}$ and $\{\theta^{(2)}_n\}_{n\geq 1}$ be independent Exp(1)-distributed random variables, and for $k\geq 1$, let
    $$
S^{(k)}:=\sum_{n=k}^{\infty}\frac{\theta_n^{(1)}-\theta_n^{(2)}}{W(n)}; \quad S^{(k)}_m:=\sum_{n=k}^{m}\frac{\theta_n^{(1)}-\theta_n^{(2)}}{W(n)}, \ m\geq k.
    $$
    Then, there exists $K\in \NN_+$ such that for all large $k\geq K$,
    $$
\PP(S^{(k)} > \frac{1}{4} \sqrt{A_k } )\geq \frac{5}{32}.
    $$
\end{lemma}
\begin{proof}
    Our assumptions imply that there exists $K\in \NN_+$ such that $W(n)\sqrt{A_n}\geq 64 \EE |\theta_{n}^{(1)}-\theta_{n}^{(2)}|^3$ for all $n\geq K$. Now fix $k\geq K$, let $\tau:=\inf\{m\geq k: |S^{(k)}_m|>  \sqrt{A_k }/4 \}$ and $T_{\theta}:=\inf\{m\geq k: |\theta_m^{(1)}-\theta_m^{(2)}|>  W(m)\sqrt{A_k}/2 \}$. Note that $\tau\leq T_{\theta}$. By definition, $ \EE (S^{(k)}_{\tau})^2\mathds{1}_{\{\tau=\infty\}} \leq A_k/16$. Moreover, 
    $$
        \EE (S^{(k)}_{\tau})^2\mathds{1}_{\{\tau<\infty\}}=\EE (S^{(k)}_{\tau-1}+\frac{\theta_{\tau}^{(1)}-\theta_{\tau}^{(2)}}{W(\tau)})^2\mathds{1}_{\{\tau<\infty\}} \leq \frac{A_k}{8}  +2 \EE \left(\frac{\theta_{\tau}^{(1)}-\theta_{\tau}^{(2)}}{W(\tau)}\right)^2.
    $$
By the choice of $K$, one has
$$
\begin{aligned}
    \EE \left(\frac{\theta_{\tau}^{(1)}-\theta_{\tau}^{(2)}}{W(\tau)}\right)^2 &= \EE \left(\frac{\theta_{\tau}^{(1)}-\theta_{\tau}^{(2)}}{W(\tau)}\right)^2\mathds{1}_{\{\tau < T_{\theta}\}}+\sum_{n=k}^{\infty} \EE \left(\frac{\theta_{n}^{(1)}-\theta_{n}^{(2)}}{W(n)}\right)^2\mathds{1}_{\{T_{\theta}=n\}} \\
    &\leq \frac{A_k }{4} + \sum_{n=k}^{\infty} \frac{2}{W(n)^2}\frac{\EE |\theta_{n}^{(1)}-\theta_{n}^{(2)}|^3}{ W(n)\sqrt{A_k}} \leq \frac{9A_k }{32}.
\end{aligned}
$$
Applying the optional stopping theorem to the $L^2$-bounded martingale $(S^{(k)}_m)_{m\geq k}$, we have
$$
\frac{3A_k}{4}\geq  \EE (S^{(k)}_{\tau})^2 = \EE \langle S^{(k)} \rangle_{\tau} \geq \EE \langle S^{(k)} \rangle_{\infty} \mathds{1}_{\{\tau=\infty\}} =  2 A_k\PP(\tau=\infty),
$$
where the quadratic variation of the martingale $(S^{(k)}_m)_{m\geq k}$ is given by
$$
\langle S^{(k)} \rangle_{m} = \sum_{n=k}^{m}\frac{2}{W(n)^2}, \quad m\geq k.
$$
By symmetry, 
$$
\PP(S^{(k)}_{\tau}>\frac{1}{4} \sqrt{A_k }, \tau <\infty )=\PP(S^{(k)}_{\tau}<-\frac{1}{4} \sqrt{A_k }, \tau <\infty )= \frac{1}{2} \PP(\tau<\infty) \geq \frac{5}{16} ,
$$
and 
$$
\PP(S^{(k)} \geq  S^{(k)}_{\tau}| S^{(k)}_{\tau}>\frac{1}{4} \sqrt{A_k }, \tau <\infty )\geq  \frac{1}{2},
$$
These two inequalities imply the desired result.
\end{proof}

\begin{proof}[Proof of Theorem \ref{wnnewprop}] 
    By (\ref{decomsigman}), for any $j \in \{1,2,\cdots,N_c\}$ and $n\geq a_j+ d-1$, if $\sigma_{n+1}(j)<\infty$, we can write
$$
\sigma_{n+1}(j)-\sigma_{n}(j)=\sum_{i=0}^{d-1}\frac{b_i}{W(n-i)}, 
$$
where $b_i\geq 0$ and $\sum_{i=0}^{d-1} b_i=\xi^{(j)}_{n+1}$. Observe that 
$$
\frac{b_{d-1}}{W(n-d+1)}+\frac{b_{d-2}}{W(n-d+2)} =b_{d-1} \left(\frac{1}{W(n-d+1)}-\frac{1}{W(n-d+2)} \right)+ \frac{b_{d-1}+b_{d-2}}{W(n-d+2)}
$$
which belongs to the closed interval
$$
\left[\frac{b_{d-1}+b_{d-2}}{W(n-d+2)}-\delta, 
  \frac{b_{d-1}+b_{d-2}}{W(n-d+2)}+\delta\right]\ \text{where}\ \delta:=\left|\frac{\xi^{(j)}_{n+1}}{W(n-d+1)}-\frac{\xi^{(j)}_{n+1}}{W(n-d+2)}\right|.
$$
Repeating this procedure $d-1$ times gives 
    \begin{equation}
        \label{couplineq}
      |\sigma_{n+1}(j)-\sigma_{n}(j)- \frac{\xi^{(j)}_{n+1}}{W(n)}|\leq   \sum_{i=0}^{d-2} \xi^{(j)}_{n+1}\left|\frac{1}{W(n-i-1)}-\frac{1}{W(n-i)}\right|.
    \end{equation}
 
    \textbf{Case (i)}: We assume that (\ref{Wnseccond}) holds. Then, for any $k\geq n$, 
    \begin{equation}
        \label{wuwkbound}
          \frac{W(n)}{W(k)}-1 \leq W(n)\left|\frac{1}{W(k)}-\frac{1}{W(n)}\right| \leq C.
    \end{equation}
    In particular, $W(n)/\inf_{k\geq n}W(k) \leq C+1$ for any $n\geq 1$. As in (\ref{Znkddad}), the conditional Borel-Cantelli lemma then implies that a.s. there is an infinite sequence of finite stopping times $\{\tau_{n_kd}\}_{k\geq 1}$ such that at each time $\tau_{n_kd}$, there exists some $i_k \in \{1,2,\cdots,N_c\}$, 
    \begin{equation}
        \label{advantageiK}
        Z_{n_kd}(i_k) \geq d+ \max_{j\neq i_k}\{Z_{n_kd}(j)\}.
    \end{equation}
As in the proof of Theorem \ref{stronincfix}, at time $\tau_{n_kd}$, for any $j \in \{1,2,\cdots,N_c\}$, the remaining time of the timer on $e_j$ has an exponential distribution with rate $W(Z_{n_kd}(j))$, which, by a slight abuse of notation, we denote by $\xi^{(j)}_{Z_{n_kd}(j)+1}/W(Z_{n_kd}(j))$. We assume that $Z_{n_kd}(q)=\max_{j\neq i_k}\{Z_{n_kd}(j)\}$ for some $q\neq i_k$. Then,
\begin{equation}
    \label{condExiikq}
    \begin{aligned}
    &\quad \EE\left(\sum_{n=Z_{n_kd}(i_k)}^{\infty}(\xi^{(i_k)}_{n+1}+\xi^{(q)}_{n+1})\sum_{\ell=0}^{d-2}\left|\frac{1}{W(n-\ell-1)}-\frac{1}{W(n-\ell)}\right| \mid \FF_{\tau_{n_kd}} \right) \\
    &= \sum_{n=Z_{n_kd}(i_k)}^{\infty}\sum_{\ell=0}^{d-2}\left|\frac{2}{W(n-\ell-1)}-\frac{2}{W(n-\ell)}\right|  \\
    &\leq 2(d-1) \sum_{n=Z_{n_kd}(i_k)-d+1}^{\infty}\left|\frac{1}{W(n)}-\frac{1}{W(n+1)}\right|\leq \frac{2C(d-1) }{W(Z_{n_kd}(i_k)-d+1)},
  \end{aligned}
\end{equation}
where we used that for each $n\geq Z_{n_kd}(i_k)-d+1$, the term $|\frac{2}{W(n)}-\frac{2}{W(n+1)}|$ is counted at most $d-1$ times in the sum in the second line, and the last inequality follows from (\ref{Wnseccond}).

By Markov inequality, for any positive integrable random variable $\Theta$, if $m(\Theta)$ denotes its median value, then
$$
\frac{1}{2}\leq \PP(\Theta\geq m(\Theta)) \leq \frac{\EE \Theta}{m(\Theta)},
$$
and thus $m(\Theta) \leq 2 \EE \Theta$. In particular, by (\ref{condExiikq}), 
\begin{equation}
    \label{summedianbd}
   \PP\left(\sum_{n=Z_{n_kd}(i_k)}^{\infty}\sum_{\ell=0}^{d-2}\left|\frac{\xi^{(i_k)}_{n+1}+\xi^{(q)}_{n+1}}{W(n-\ell-1)}-\frac{\xi^{(i_k)}_{n+1}+\xi^{(q)}_{n+1}}{W(n-\ell)}\right| \leq \frac{4C(d-1)}{W(Z_{n_kd}(i_k)-d+1)} \mid \FF_{\tau_{n_kd}} \right) \geq \frac{1}{2}.
\end{equation}
 By (\ref{wuwkbound}) and (\ref{advantageiK}), starting from time $\tau_{n_kd}$, the time needed to visit $e_q$ once more is
\begin{equation}
    \label{Iqextra}
    \frac{\xi^{(q)}_{Z_{n_kd}(q)+1}}{W(Z_{n_kd}(q))} \geq \frac{\xi^{(q)}_{Z_{n_kd}(q)+1}}{(C+1)W(Z_{n_kd}(i_k)-d+1)}.
\end{equation}
Again, here $\xi^{(q)}_{Z_{n_kd}(q)+1}/W(Z_{n_kd}(q))$ should be interpreted as the remaining time of the timer on $e_q$, which is independent of $\{\xi^{(i_k)}_{n+1},\xi^{(q)}_{n+1}\}_{n\geq Z_{n_kd}(i_k)}$ in virtue of (\ref{advantageiK}). Therefore, by (\ref{summedianbd}) and that $\xi^{(q)}_{Z_{n_kd}(q)+1} \sim \operatorname{Exp}(1)$, 
$$
 \PP(E_{q,i_k}\mid \FF_{\tau_{n_kd}})\geq \frac{1}{2} \PP(\xi^{(q)}_{Z_{n_kd}(q)+1}>4C(d-1)(C+1)\mid \FF_{\tau_{n_kd}}) \geq \frac{1}{2}e^{-4C(d-1)(C+1)} ,  
$$
where the event $E_{q,i_k}$ is defined by 
$$
E_{q,i_k}:=\left\{\frac{\xi^{(q)}_{Z_{n_kd}(q)+1}}{(C+1)W(Z_{n_kd}(i_k)-d+1)}> \sum_{n=Z_{n_kd}(i_k)}^{\infty}\sum_{\ell=0}^{d-2}\left|\frac{\xi^{(i_k)}_{n+1}+\xi^{(q)}_{n+1}}{W(n-\ell-1)}-\frac{\xi^{(i_k)}_{n+1}+\xi^{(q)}_{n+1}}{W(n-\ell)}\right|\right\}.
$$
By symmetry (one may interchange $\{\xi^{(i_k)}_{n+1}\}_{n\geq Z_{n_kd}(i_k)}$ and $\{\xi^{(q)}_{n+1}\}_{n\geq Z_{n_kd}(i_k)}$), 
\begin{equation}
    \label{Eqlowbd}
   \begin{aligned}
   &\quad \PP(\{\sum_{n=Z_{n_kd}(i_k)}^{\infty}\frac{\xi^{(i_k)}_{n+1}}{W(n)}<\sum_{n=Z_{n_kd}(i_k)}^{\infty}\frac{\xi^{(q)}_{n+1}}{W(n)}\}\cap E_{q,i_k}\mid \FF_{\tau_{n_kd}}) \\
   &= \PP(\{\sum_{n=Z_{n_kd}(i_k)}^{\infty}\frac{\xi^{(q)}_{n+1}}{W(n)}<\sum_{n=Z_{n_kd}(i_k)}^{\infty}\frac{\xi^{(i_k)}_{n+1}}{W(n)}\}\cap E_{q,i_k}\mid \FF_{\tau_{n_kd}}) \\
   &=\frac{\PP( E_{q,i_k}\mid \FF_{\tau_{n_kd}})}{2} \geq  \frac{1}{4}e^{-4C(d-1)(C+1)}=:c_3.
\end{aligned}
\end{equation}
For $j\neq i_k$, we let $E_j$ be the event that
$$
\begin{aligned}
   &\frac{\xi^{(j)}_{Z_{n_kd}(j)+1}}{(C+1)W(Z_{n_kd}(i_k)-d+1)}+\sum_{n=Z_{n_kd}(i_k)}^{\infty}\left(\frac{\xi^{(j)}_{n+1}}{W(n)}-\sum_{\ell=0}^{d-2}\xi^{(j)}_{n+1}\left|\frac{1}{W(n-\ell-1)}-\frac{1}{W(n-\ell)}\right|\right) \\
   &> \sum_{n=Z_{n_kd}(i_k)}^{\infty}\left(\frac{\xi^{(i_k)}_{n+1}}{W(n)}+\sum_{\ell=0}^{d-2}\xi^{(i_k)}_{n+1}\left|\frac{1}{W(n-\ell-1)}-\frac{1}{W(n-\ell)}\right|\right).
\end{aligned}
$$
Note that by symmetry, (\ref{Eqlowbd}) still holds if one replaces $q$ by $j\neq i_k$. Thus, 
$\PP(E_j\mid \FF_{\tau_{n_kd}})=\PP(E_q\mid \FF_{\tau_{n_kd}})\geq c_3$. Since $\{\xi^{(i)}_n\}_{1\leq i\leq N_c, n\geq 1}$ are i.i.d., by H\"{o}lder's inequality, one has
$$
\begin{aligned}
    \PP(\bigcap_{j\neq i_k}E_j\mid \FF_{\tau_{n_kd}})&=\EE \left[ \PP\left(\bigcap_{j\neq i_k}E_j\mid  \{\xi^{(i_k)}_{n+1}\}_{n\geq Z_{n_kd}(i_k)}, \FF_{\tau_{n_kd}}\right)\mid \FF_{\tau_{n_kd}}\right] \\
    &= \EE \left[\PP\left(E_{q}\mid \{\xi^{(i_k)}_{n+1}\}_{n\geq Z_{n_kd}(i_k)}, \FF_{\tau_{n_kd}}\right)^{N_c-1} \mid \FF_{\tau_{n_kd}}\right] \\
    &\geq \PP(E_{q}\mid \FF_{\tau_{n_kd}})^{N_c-1}.
\end{aligned}
$$
In virtue of (\ref{couplineq}), on $\bigcap_{j\neq i_k}E_j$, we have $\lim_{n\to \infty}\sigma_n(i_k)<\infty$ and 
$$
\lim_{n\to \infty}\sigma_n(i_k)-\tau_{n_kd}<  \min_{j\neq i_k}\{ \lim_{n\to \infty}\sigma_n(j)-\tau_{n_kd} \}.
$$
That is, the remaining time needed to visit $e_{i_k}$ i.o. is strictly less than that needed for any other edge. In other words, by Proposition \ref{ZkdNin}, only balls of color $i_k$ are taken infinitely often. Therefore,
   $$
\PP_1^W(\mathcal{M}\mid \FF_{\tau_{n_kd}}) \geq \PP(\bigcap_{j\neq i_k}E_j\mid \FF_{\tau_{n_kd}})  \geq c_3^{N_c-1}>0.
   $$
We conclude that $\PP_1^W(\mathcal{M})=1$ by Levy's 0-1 law.

\textbf{Case (ii)}: We assume that (\ref{Wn2Anlarge}) holds. By a slight abuse of notation, for $k\geq 1$, we let $i_k$ be such that $Z_{kd}(i_k) = \max_{1\leq j\neq N_c}\{Z_{kd}(j)\}$. For any $j\neq i_k$, by Lemma \ref{martdeviation}, 
\begin{equation}
    \label{jiklemmart}
    \PP(\sum_{n=Z_{kd}(i_k)}^{\infty}\frac{\xi^{(j)}_{n+1}-\xi^{(i_k)}_{n+1}}{W(n)} > \frac{\sqrt{A_{Z_{kd}(i_k)}}}{4} \mid \FF_{\tau_{kd}})\geq \frac{5}{32},
\end{equation}
and, by Markov's inequality and (\ref{condExiikq}), for large $k$,
\begin{equation}
    \label{markovikj}
    \begin{aligned}
        &\quad\PP\left(\sum_{n=Z_{kd}(i_k)}^{\infty}\sum_{\ell=0}^{d-2}\left|\frac{\xi^{(i_k)}_{n+1}+\xi^{(j)}_{n+1}}{W(n-\ell-1)}-\frac{\xi^{(i_k)}_{n+1}+\xi^{(j)}_{n+1}}{W(n-\ell)}\right| \geq  \frac{\sqrt{A_{Z_{kd}(i_k)}}}{4}  \mid \FF_{\tau_{kd}} \right) \\
        &\leq \frac{8(d-1)\delta_{Z_{kd}(i_k)-d+1}}{\sqrt{A_{Z_{kd}(i_k)}}}=\frac{8(d-1)\delta_{Z_{kd}(i_k)-d+1}}{\sqrt{A_{Z_{kd}(i_k)-d+1}}}\frac{\sqrt{A_{Z_{kd}(i_k)-d+1}}}{\sqrt{A_{Z_{kd}(i_k)}}} \leq \frac{1}{8}.
    \end{aligned}
\end{equation}
where we used that $A_{n+1}/A_n$ converges to $1$ by (\ref{Wn2Anlarge}). By a slight abuse of notation, we let $E_j$ be the event that
$$
\sum_{n=Z_{kd}(i_k)}^{\infty}\frac{\xi^{(j)}_{n+1}-\xi^{(i_k)}_{n+1}}{W(n)} > \sum_{n=Z_{kd}(i_k)}^{\infty}\left((\xi^{(j)}_{n+1}+\xi^{(i_k)}_{n+1})\sum_{\ell=0}^{d-2}\left|\frac{1}{W(n-\ell-1)}-\frac{1}{W(n-\ell)}\right|\right).
$$
We deduce from (\ref{jiklemmart}) and (\ref{markovikj}) that 
$$
\PP(E_j \mid \FF_{\tau_{kd}}) \geq \frac{5}{32}-\frac{1}{8}=\frac{1}{32}. 
$$
The rest of the proof follows the same lines as that of Case 1: H\"{o}lder's inequality and (\ref{couplineq}) imply that for all large $k$,
$$
\PP_1^W(\mathcal{M}\mid \FF_{\tau_{kd}}) \geq \PP(\bigcap_{j\neq i_k}E_j\mid \FF_{\tau_{kd}})  \geq (\frac{1}{32})^{N_c-1},
   $$
   which shows that $\PP_1^W(\mathcal{M})=1$ by Levy's 0-1 law.
\end{proof}

\section{Coupling} 
\label{seccouple}
\begin{proof}[Proof of Lemma \ref{couplcomparenewurn}]
    Let $\{U_n^{(i)}\}_{n\geq 1,1\leq i\leq 2}$ be i.i.d. uniform random variables on $(0, 1)$. For any $n\in \NN$, we set $\tilde{B}_{2n+1}(1)=\tilde{B}_{2n}(1)+1$, resp. $B_{n+1}(1)=B_n(1)+1$ if 
    $$
 U_{n+1}^{(1)}< \frac{W(\tilde{B}_{2n}(1))}{W(\tilde{B}_{2n}(1))+W(\tilde{R}_{2n}^{*})}, \ \text{resp.}\ \  U_{n+1}^{(1)}< \frac{pW(B_n^{*})}{W(B_n^{*})+W(R_n^{*})}+ \frac{(1-p)W(B_n(1))}{W(B_n(1))+W(R_n(1))},
    $$
    otherwise, we set $\tilde{R}_{2n+1}(1)=\tilde{R}_{2n}(1)+1$, resp. $R_{n+1}(1)=R_n(1)+1$; we set $\tilde{B}_{2n+2}(2)=\tilde{B}_{2n}(2)+1$, resp. $B_{n+1}(2)=B_n(2)+1$ if 
    $$
 U_{n+1}^{(2)}< \frac{W(\tilde{B}_{2n}(2))}{W(\tilde{B}_{2n}(2))+W(\tilde{R}_{2n+1}^{*})}, \ \text{resp.}\ \   U_{n+1}^{(2)}< \frac{pW(B_n^{*})}{W(B_n^{*})+W(R_n^{*})}+ \frac{(1-p)W(B_n(2))}{W(B_n(2))+W(R_n(2))},
    $$
    otherwise, we set $\tilde{R}_{2n+2}(1)=\tilde{R}_{2n}(2)+1$, resp. $R_{n+1}(2)=R_n(2)+1$. Then it is easy to check that $\left(B_n, R_n\right)_{n \in \mathbb{N}}$ and $\left(\tilde{B}_n, \tilde{R}_n\right)_{n \in \mathbb{N}}$ defined above have the desired laws. One can then prove (\ref{coupleine}) by induction.
\end{proof}

\begin{proof}[Proof of Lemma \ref{estnewurntimelines}]
    As in the proof of Theorem \ref{wnnewprop}, we use a time-lines construction to prove Lemma \ref{estnewurntimelines}. Let $G=(V,E)$ be a directed multigraph with 
    $$V=\{v_1,v_2\}, \quad E=\{(v_1,b,v_2),(v_2,b,v_1),(v_1,r,v_2),(v_2,r,v_1)\}.$$ 
    where $(v_1,b,v_2)$ and $(v_2,b,v_1)$ are two arcs from $v_1$ to $v_2$ and $v_2$ to $v_1$, respectively. We regard $e_r:=\{(v_1,r,v_2), (v_2,r,v_1)\}$ as an undirected edge. 
    \begin{center}
        \begin{tikzpicture}
            [ultra thick]
        \draw (-4, 0.5) node[circle, draw](x) {$v_1$};
        \draw ( 0.5, 0.5) node[circle, draw](y) {$v_2$};
        \draw[black, ->] (x) to[bend right=30] node[above]{$(v_1,b,v_2)$} (y);
        \draw[red,<->] (y) to[bend right=50] node[above]{$e_r$} (x);
        \draw[black,->] (y) to[bend left=60] node[below]{$(v_2,b,v_1)$} (x);
        \end{tikzpicture}
    \end{center}
    
    Let $\{\xi^{(r)}_n\}_{n\geq 1}$, $\{\xi^{(v_1,b)}_n\}_{n\geq 1}$, $\{\xi^{(v_2,b)}_n\}_{n\geq 1}$ be independent Exp(1)-distributed random variables. We define a continuous-time jump process $Y=(Y_t)_{t\geq 0}$ on $G$:\\
    (i) Define, on each (directed or undirected) edge $e \in \{(v_1,b,v_2), (v_2,b,v_1),e_r\}$, independent point processes (alarm times) $\{ V_{n}^{(e)}\}_{n\geq 1}$: for each $n\in \NN$,
\begin{equation}
    \label{Vclockrb}
    V_{n+1}^{(v_1,b,v_2)}:=\sum_{j=\tilde{B}_0(2)}^{\tilde{B}_0(2)+n} \frac{\xi^{(v_2,b)}_j}{W(j)}, \quad V_{n+1}^{(v_2,b,v_1)}:=\sum_{j=\tilde{B}_0(1)}^{\tilde{B}_0(1)+n} \frac{\xi^{(v_1,b)}_j}{W(j)},\quad V_{n+1}^{(e_r)}:=\sum_{j=\tilde{R}_0^{*}}^{\tilde{R}_0^{*}+n} \frac{\xi^{(r)}_j}{W(j)}.
\end{equation}
 (ii) Each edge $e$ has its own clock, denoted by $\tilde{T}_{e}(t)$. If $e=(v_1,b,v_2)$ (resp. $(v_2,b,v_1)$), $\tilde{T}_{e}$ runs when $Y$ is at $v_1$ (resp. $v_2$). For $e=e_r$, set $\tilde{T}_{e_r}(t):=t$. \\
(iii) Set $Y_0:=v_2$. If at time $t>0$, the clock of an edge $e$ rings, i.e. $\tilde{T}_{e}(t)=V_{k}^{(e)}$ for some $k>0$, then $Y$ jumps to cross $e$ instantaneously.

Let $0=\tau_0<\tau_1<\tau_2<\cdots$ be the jumping times of $Y$. For $i=1,2$, as in (\ref{defZni}), let $Z_n^{(b)}(i)$ be the number of visits to $(v_{3-i},b,v_{i})$ up to time $\tau_n$ plus $\tilde{B}_0(i)$, and let $Z_n^{(r)}(i)$ be the number of visits to $(v_{3-i},r,v_{i})$ up to time $\tau_n$ plus $\tilde{R}_0(i)$ (note that here we distinguish $(v_1,r,v_2)$ and $(v_2,r,v_1)$). Then, as in Proposition \ref{ZkdNin}, one can show by the memoryless property of exponentials that 
    $$
    (Z_{n}^{(b)}(i),Z_{n}^{(r)}(i))_{1\leq i \leq 2, n \in \NN} \stackrel{\mathcal{L}}{=} (\tilde{B}_{n}(i),\tilde{R}_{n}(i))_{1\leq i \leq 2, n\in \NN}.
    $$
We may assume that $(Z_{n}^{(b)}(i),Z_{n}^{(r)}(i))_{1\leq i \leq 2, n \in \NN} = (\tilde{B}_n(i),\tilde{R}_n(i))_{1\leq i \leq 2, n\in \NN}$ on some probability space. We denote by $(\tilde{\FF}_{t})$ the natural filtration of $Y$. 

Now, assume that $\tilde{R}_{2n}^{*}<\varepsilon_1n$, and in particular, $\tilde{B}_{2n}(i)\geq (1-\varepsilon_1)n$ for $i=1,2$. Starting from time $\tau_{2n}$, the total time that $Y$ needs to spend to cross the undirected edge $e_r$ infinitely often is 
    \begin{equation}
     \label{totaltimelower}
     \sum_{j=\tilde{R}_{2n}^{*}}^{\infty} \frac{\xi_{j+1}^{(r)}}{W(j)}\geq \sum_{j=\lceil \varepsilon_1 n \rceil}^{\infty} \frac{\xi_{j+1}^{(r)}}{W(j)}=: T_{n}^{(r)},
    \end{equation}
    where $\lceil \cdot\rceil$ is the usual ceiling function. Again, the first term $\xi_{\tilde{R}_{2n}^{*}+1}^{(r)}/(\tilde{R}_{2n}^{*})^{\alpha}$ should be interpreted as the remaining time of the clock on $e_r$ at time $\tau_{2n}$. On the other hand, the total time that $Y$ needs to spend to cross both $(v_1,b,v_2)$ and $(v_2,b,v_1)$ infinitely often is 
    \begin{equation}
     \label{totaltimeupper}
     T_{n}^{(b)}:=\sum_{j=\tilde{B}_{2n}(1)}^{\infty} \frac{\xi_{j+1}^{(v_1,b)}}{W(j)} +\sum_{j=\tilde{B}_{2n}(2)}^{\infty} \frac{\xi_{j+1}^{(v_2,b)}}{W(j)}.
 \end{equation}
 Note that up to time $\tau_{\infty}$, the time $Y$ spends at $v_1$, resp. $v_2$, is upper bounded by $\sum_{j=\tilde{B}_{2n}(2)}^{\infty} \xi_{j+1}^{(v_2,b)}/W(j) $, resp. $\sum_{j=\tilde{B}_{2n}(1)}^{\infty} \xi_{j+1}^{(v_1,b)}/W(j)$. Now using properties of exponential random variables, we have, by (\ref{totaltimelower}) and (\ref{totaltimeupper}),
$$
\operatorname{Var}(\widehat{T}_{n}^{(r)}\mid \tilde{\FF}_{\tau_{2n}})=\sum_{j=\lceil \varepsilon_1 n \rceil+1}^{\infty} \frac{1}{W(j)^2},\ \text{ where}\ \widehat{T}_{n}^{(r)}:=T_{n}^{(r)}-\frac{\xi_{\lceil \varepsilon_1 n \rceil+1}^{(r)}}{W(\lceil \varepsilon_1 n \rceil)},
$$
and  
$$
\EE (\widehat{T}_{n}^{(r)} \mid \tilde{\FF}_{\tau_{2n}}) = \sum_{j=\lceil \varepsilon_1 n \rceil + 1}^{\infty} \frac{1}{W(j)}, \quad \EE (T_{n}^{(b)} \mid \tilde{\FF}_{\tau_{2n}})\leq  \sum_{j=\lceil (1-\varepsilon_1) n \rceil}^{\infty} \frac{2}{W(j)}.
$$
By Chebyshev's inequality, 
\begin{equation}
    \label{Tnrine}
    \begin{aligned}
        &\quad \PP(T_{n}^{(r)}>\frac{1}{4}\sum_{j=\lceil \varepsilon_1 n \rceil}^{\infty} \frac{1}{W(j)} \mid \tilde{\FF}_{\tau_{2n}})\\
        &\geq \PP\left( \xi_{\lceil \varepsilon_1 n \rceil+1}^{(r)}\geq 1, \widehat{T}_{n}^{(r)} >-\frac{3}{4W(\lceil \varepsilon_1 n \rceil)}+ \frac{1}{4}\sum_{j=\lceil \varepsilon_1 n \rceil+1}^{\infty} \frac{1}{W(j)}  \mid \tilde{\FF}_{\tau_{2n}}\right) \\
        &\geq \frac{1}{e}\left(1- \PP\left(\EE (\widehat{T}_{n}^{(r)} \mid \tilde{\FF}_{\tau_{2n}}) -\widehat{T}_{n}^{(r)}\geq \frac{3}{4}\sum_{j=\lceil \varepsilon_1 n \rceil}^{\infty} \frac{1}{W(j)}\mid \tilde{\FF}_{\tau_{2n}}\right)\right)\\
        &\geq \frac{1}{e}\left(1-\frac{16}{9}\left(\sum_{j=\lceil \varepsilon_1 n \rceil+1}^{\infty} \frac{1}{W(j)^2}\right)\left( \sum_{j=\lceil \varepsilon_1 n \rceil}^{\infty} \frac{1}{W(j)} \right)^{-2}\right) \geq \frac{1}{9e},
        \end{aligned}
\end{equation}
where we used the monotonicity of $W$ to get 
$$
\left( \sum_{j=\lceil \varepsilon_1 n \rceil}^{\infty} \frac{1}{W(j)} \right)^{2} \geq \sum_{j=\lceil \varepsilon_1 n \rceil+1}^{\infty} \left(\frac{1}{W(\lceil \varepsilon_1 n \rceil)} +\frac{1}{W(j)} \right)\frac{1}{W(j)} \geq 2\sum_{j=\lceil \varepsilon_1 n \rceil+1}^{\infty} \frac{1}{W(j)^2}.
$$
On the other hand, Markov's inequality implies that
\begin{equation}
    \label{ChebyshevT}
    \PP\left(T_n^{(b)}\geq \frac{1}{4}\sum_{j=\lceil \varepsilon_1 n \rceil}^{\infty} \frac{1}{W(j)}\mid \tilde{\FF}_{\tau_{2n}}\right)\leq 8\left(\sum_{j=\lceil (1-\varepsilon_1) n \rceil}^{\infty} \frac{1}{W(j)}\right)\left(\sum_{j=\lceil \varepsilon_1 n \rceil}^{\infty} \frac{1}{W(j)}\right)^{-1},
\end{equation}
In virtue of (\ref{remKcond}), the right-hand side of (\ref{ChebyshevT}) can be made arbitrarily small for all $n\geq \kappa$ by first choosing a small $\varepsilon_1$ and then choosing a large $\kappa$. Using (\ref{Tnrine}) and (\ref{ChebyshevT}), by possibly choosing a smaller $\varepsilon_1$ and a larger $\kappa$, we have
$$
\PP(T_n^{(b)} < T_n^{(r)}\mid \tilde{\FF}_{\tau_{2n}}) \geq \PP(T_n^{(b)} <\frac{1}{4}\sum_{j=\lceil \varepsilon_1 n \rceil}^{\infty} \frac{1}{W(j)}< T_n^{(r)}\mid \tilde{\FF}_{\tau_{2n}})> \frac{1}{10e}
$$
if $n\geq \kappa$ and $\tilde{R}_{2n}^{*}<\varepsilon_1n$. It remains to observe that on the event $\{T_n^{(b)} < T_n^{(r)}\}$, only black edges are crossed infinitely often, that is, only black balls are drawn infinitely often.
\end{proof}

\section{Some open questions}

For the interacting urn mechanism with strong reinforcement, some interesting questions remain unsolved.

\begin{enumerate}[label=(\roman*)]
    \item For power function/polynomial reinforcements, an important question is whether there is a phase transition at $p_{\alpha}$. In Remark \ref{palpharem}, we conjecture that $\tilde{p}_{\alpha}= p_{\alpha}$. If this is true, is $p_{\alpha}$ increasing in $\alpha$? (Intuitively speaking, the reinforcement becomes stronger as $\alpha$ grows.) And does the limit of $p_{\alpha}/(\alpha-1)$ exist as $\alpha$ approaches 1 from above?  To solve these questions, we may need a better understanding of the system (\ref{odedxFm}), or we need to couple $\PP_{p_1}^{(\alpha)}$ and $\PP_{p_2}^{(\alpha)}$ for $p_1<p_2$.

    \item We conjecture that for a large class of strong reinforcement sequences, say, $\{W(n)\}_{n\in \NN}$ is non-decreasing and strong, one has $\PP_p^W(\mathcal{M})=1$ if $p>1/2$. Currently, this assertion is proved only for exponential and polynomial reinforcements, see \cite[Theorem 3.2]{launay2011interacting} and Theorem \ref{pmlarger12}.
    \item As was conjectured in \cite{launay2012urns}, can one prove that $\PP_1^W(\mathcal{M})=1$ if one only assumes that $\{W(n)\}_{n\in \NN}$ is a strong reinforcement sequence? The best result in this direction, known to date, seems to be Theorem \ref{wnnewprop}.
\end{enumerate}

\section{Acknowledgement}

I am very grateful to Professor Tarrès, my Ph.D. advisor, for inspiring the choice of this subject.

\bibliographystyle{plain}
\bibliography{math_ref}

\begin{thebibliography}{10}

\bibitem{MR2493995}
Raffaele Argiento, Robin Pemantle, Brian Skyrms, and Stanislav Volkov.
\newblock Learning to signal: analysis of a micro-level reinforcement model.
\newblock {\em Stochastic Process. Appl.}, 119(2):373--390, 2009.

\bibitem{MR1767993}
Michel Bena\"{\i}m.
\newblock Dynamics of stochastic approximation algorithms.
\newblock In {\em S\'{e}minaire de {P}robabilit\'{e}s, {XXXIII}}, volume 1709
  of {\em Lecture Notes in Math.}, pages 1--68. Springer, Berlin, 1999.

\bibitem{MR3346459}
Michel Bena\"{\i}m, Itai Benjamini, Jun Chen, and Yuri Lima.
\newblock A generalized {P}\'{o}lya's urn with graph based interactions.
\newblock {\em Random Structures Algorithms}, 46(4):614--634, 2015.

\bibitem{MR1082341}
Albert Benveniste, Michel M\'{e}tivier, and Pierre Priouret.
\newblock {\em Adaptive algorithms and stochastic approximations}, volume~22 of
  {\em Applications of Mathematics (New York)}.
\newblock Springer-Verlag, Berlin, 1990.
\newblock Translated from the French by Stephen S. Wilson.

\bibitem{MR2442439}
Vivek~S. Borkar.
\newblock {\em Stochastic approximation}.
\newblock Cambridge University Press, Cambridge; Hindustan Book Agency, New
  Delhi, 2008.
\newblock A dynamical systems viewpoint.

\bibitem{MR3452818}
Irene Crimaldi, Paolo Dai~Pra, and Ida~Germana Minelli.
\newblock Fluctuation theorems for synchronization of interacting {P}\'{o}lya's
  urns.
\newblock {\em Stochastic Process. Appl.}, 126(3):930--947, 2016.

\bibitem{MR4547356}
Irene Crimaldi, Pierre-Yves Louis, and Ida~G. Minelli.
\newblock Interacting nonlinear reinforced stochastic processes:
  synchronization or non-synchronization.
\newblock {\em Adv. in Appl. Probab.}, 55(1):275--320, 2023.

\bibitem{MR0837655}
Didier Dacunha-Castelle and Marie Duflo.
\newblock {\em Probability and statistics. {V}ol. {II}}.
\newblock Springer-Verlag, New York, 1986.
\newblock Translated from the French by David McHale.

\bibitem{MR3217785}
Paolo Dai~Pra, Pierre-Yves Louis, and Ida~G. Minelli.
\newblock Synchronization via interacting reinforcement.
\newblock {\em J. Appl. Probab.}, 51(2):556--568, 2014.

\bibitem{MR1030727}
Burgess Davis.
\newblock Reinforced random walk.
\newblock {\em Probab. Theory Related Fields}, 84(2):203--229, 1990.

\bibitem{Drinea_Frieze_Mitzenmacher_2002}
Eleni Drinea, Alan Frieze, and Michael Mitzenmacher.
\newblock Balls and bins models with feedback.
\newblock In {\em Proceedings of 13th Annual ACM-SIAM Symposium on Discrete
  Algorithms}, pages 308--315. Society for Industrial and Applied Mathematics,
  Philadelphia, PA, USA, 2002.

\bibitem{MR1485774}
Marie Duflo.
\newblock {\em Random iterative models}, volume~34 of {\em Applications of
  Mathematics (New York)}.
\newblock Springer-Verlag, Berlin, 1997.
\newblock Translated from the 1990 French original by Stephen S. Wilson and
  revised by the author.

\bibitem{hu2011reinforcement}
Yilei Hu, Brian Skyrms, and Pierre Tarr{\`e}s.
\newblock Reinforcement learning in signaling game.
\newblock {\em arXiv preprint arXiv:1103.5818}, 2011.

\bibitem{MR4546116}
Gursharn Kaur and Neeraja Sahasrabudhe.
\newblock Interacting urns on a finite directed graph.
\newblock {\em J. Appl. Probab.}, 60(1):166--188, 2023.

\bibitem{MR2977424}
Steven~G. Krantz and Harold~R. Parks.
\newblock {\em The implicit function theorem}.
\newblock Modern Birkh\"{a}user Classics. Birkh\"{a}user/Springer, New York,
  2013.
\newblock History, theory, and applications, Reprint of the 2003 edition.

\bibitem{launay2011interacting}
Micka{\"e}l Launay.
\newblock Interacting urn models.
\newblock {\em arXiv preprint arXiv:1101.1410}, 2011.

\bibitem{launay2012urns}
Micka{\"e}l Launay.
\newblock Urns with simultaneous drawing.
\newblock {\em arXiv preprint arXiv:1201.3495}, 2012.

\bibitem{launay2012generalized}
Micka{\"e}l Launay and Vlada Limic.
\newblock Generalized interacting urn models.
\newblock {\em arXiv preprint arXiv:1207.5635}, 2012.

\bibitem{MR2349575}
Vlada Limic and Pierre Tarr\`es.
\newblock Attracting edge and strongly edge reinforced walks.
\newblock {\em Ann. Probab.}, 35(5):1783--1806, 2007.

\bibitem{mirebrahimi2019interacting}
Seyedmeghdad Mirebrahimi.
\newblock {\em Interacting stochastic systems with individual and collective
  reinforcement}.
\newblock PhD thesis, Universit{\'e} de Poitiers, 2019.

\bibitem{MR2376425}
Roberto Oliveira.
\newblock Balls-in-bins processes with feedback and {B}rownian motion.
\newblock {\em Combin. Probab. Comput.}, 17(1):87--110, 2008.

\bibitem{MR1055428}
Robin Pemantle.
\newblock Nonconvergence to unstable points in urn models and stochastic
  approximations.
\newblock {\em Ann. Probab.}, 18(2):698--712, 1990.

\bibitem{MR2282181}
Robin Pemantle.
\newblock A survey of random processes with reinforcement.
\newblock {\em Probab. Surv.}, 4:1--79, 2007.

\bibitem{raimond2023non}
Olivier Raimond and Pierre Tarres.
\newblock Non-convergence to unstable equilibriums for continuous-time and
  discrete-time stochastic processes.
\newblock {\em arXiv preprint arXiv:2311.02978}, 2023.

\bibitem{MR3581253}
Neeraja Sahasrabudhe.
\newblock Synchronization and fluctuation theorems for interacting {F}riedman
  urns.
\newblock {\em J. Appl. Probab.}, 53(4):1221--1239, 2016.

\bibitem{tarres2011localization}
P~Tarrès.
\newblock Localization of reinforced random walks.
\newblock {\em arXiv preprint arXiv:1103.5536}, 2011.

\end{thebibliography}

\end{document}